\numberwithin{equation}{section}
\theoremstyle{definition}
\newtheorem{definition}{Definition}[section]
\newtheorem{theorem}[definition]{Theorem}
\newtheorem*{theorem*}{Conjecture}
\newtheorem{proposition}[definition]{Proposition}
\newtheorem{lemma}[definition]{Lemma}
\theoremstyle{remark}
\newtheorem{remark}[definition]{Remark}
\newtheorem{example}[definition]{Example}
\newcounter{enumctr}
\newcommand{\R}{\mathbb{R}}
\newcommand{\N}{\mathbb{N}}
\begin{document}
\begin{frontmatter}
\title{Fractional coupled Halanay inequality and its applications}

\tnotetext[label1]{The research of Dongling Wang is supported in part by the National Natural Science Foundation of China under grants 12271463.
\\ Declarations of interest: none.
}

\author[VN1]{La Van Thinh} 
\ead{ lavanthinh@hvtc.edu.vn;}
\author[VN2]{Hoang The Tuan} 
\ead{httuan@math.ac.vn;}
\author[XTU]{Dongling Wang \corref{mycorrespondingauthor}}
\ead{wdymath@xtu.edu.cn; ORCID 0000-0001-8509-2837}
\cortext[mycorrespondingauthor]{Corresponding author. }
\author[XTU]{Yin Yang} 
\ead{yangyin2020@xtu.edu.cn}

\address[VN1]{Academy of Finance, Le Van Hien Street, Duc Thang Ward, Bac Tu Liem District, Hanoi, Viet Nam}
\address[VN2]{Institute of Mathematics, Vietnam Academy of Science and Technology, 18 Hoang Quoc Viet, 10307 Ha Noi, Viet Nam}
\address[XTU]{Hunan Key Laboratory for Computation and Simulation in Science and Engineering,\\ School of Mathematics and Computational Science, Xiangtan University, Xiangtan, Hunan 411105, China}

\begin{abstract}
This paper introduces a generalized fractional Halanay-type coupled inequality, which serves as a robust tool for characterizing the asymptotic stability of diverse time fractional functional differential equations, particularly those exhibiting Mittag-Leffler type stability. 
Our main tool is a sub-additive property of Mittag-Leffler function and its optimal asymptotic decay rate estimation. Our results further optimize and improve some existing results in the literature. We illustrate two significant applications of this fractional Halanay-type inequality.
Firstly, by combining our results in this work with the positive representation method
positive representation of delay differential systems, we establish an asymptotic stability criterion for a category of linear fractional coupled systems with bounded delays. This criterion extends beyond the traditional boundaries of positive system theory, offering a new perspective on stability analysis in this domain. Secondly, through energy estimation, we establish the contractility and dissipativity of a class of time fractional neutral functional differential equations. Our analysis reveals the typical long-term polynomial decay behavior inherent in time fractional evolutionary equations, thereby providing a solid theoretical foundation for subsequent numerical investigations. 
\end{abstract}
	
\begin{keyword}
Generalized fractional Halanay inequality, Mittag-Leffler stability, Fractional delay linear systems, Contractility and dissipativity.
\end{keyword}
\end{frontmatter}

    \section{Introduction}
   Time fractional order models and the corresponding Volterra integral equations with singular kernels have seen remarkable attention and development in recent years. The primary reason for this growing interest lies in their ability to more accurately capture various forms of anomalous diffusion phenomena, such as sub-diffusion and super-diffusion, when compared to traditional classical integer-order solution equations \cite{Kai, Met20, Jin21}. 
 A natural explanation for fractional calculus in mathematics or physics can be found in the random walk model. This model demonstrates that the mean square displacement of particles in anomalous diffusion processes follows a power-law relationship with time as $\langle x(t) \rangle \sim C t^{\alpha}$, where the parameter $\alpha\in(0,1) \cap (1, 2)$. In contrast, the standard diffusion process exhibits a linear relationship between the mean square displacement and time, expressed as $ \langle x(t) \rangle \sim C t$ \cite{Met20}.
   
The non-local nature of time fractional derivatives and weak singular kernels result in polynomial decay of solutions in anomalous diffusion models over time, also known as Mittag-Leffler stability \cite{Zacher15}. This is completely different from the exponential decay rate of the standard diffusion model solution over time.
On the other hand, the phenomenon of delay effects is prevalent in various practical models. These delay phenomena primarily arise from the time required for the transmission of materials, energy, and information between different components of a system. Specifically, these delays including physical delays, biological delays, and informational delays. When we consider both these delay effects and the memory characteristics of a system during modeling, we can derive various time-fractional delay differential equations. These equations not only provide a more accurate description of the system's dynamic behavior but also help us better understand and predict the system's response across different time scales. For example, in biomedical science, considering the delay effects in disease transmission can help us more accurately predict the progression of an epidemic. In engineering control, taking into account the delay characteristics of a system can enhance the stability and efficiency of control algorithms \cite{WHO, WangWS}.

The long-term asymptotic behavior of the solution is an important qualitative characteristic of an equation. For delay equations, or more generally functional differential equations, a key tool for studying the long-term asymptotic stability of solutions is the Halanay-type inequality \cite{WangWS}. This inequality characterizes the asymptotic stability and exponential decay rate of solutions under various delay effects.

For time fractional functional differential equations, a fractional version of the Halanay inequality was first introduced in \cite{Wang_15}, proving the asymptotic stability, but did not characterize the decay rate of the solution.  By essentially improving the results in \cite{Wang_15}, an improved fractional Halanay inequality was established in \cite{Wang_19}, which exhibits the polynomial decay rate of solutions to fractional  functional differential equations. Thus distinguishing fractional functional differential equations from integer ones in terms of long-term qualitative behavior. Some other related studies, see for example \cite{HKT_23, NNThang, KT23, Wang, ChengHu}.

We briefly review the main ideas of \cite{Wang_15, Wang_19} along with their generalizations.
In \cite{Wang_15}, the authors use the supremum theorem and mathematical induction to achieve asymptotic stability. However, this approach fails to distinguish decay rates between fractional and integer systems.
In \cite{Wang_19}, the authors assume a Mittag-Leffler function representation for the solution and derive the fractional version characteristic equation. By analyzing its strict negative roots, they obtain the optimal algebraic decay rate of the solution, known as Mittag-Leffler stability.

However, due to the Mittag-Leffler functions, the fractional characteristic equation becomes very complex, 
making it more difficult to analyze the distribution of its roots than the corresponding integer order models.
In fact, for the fractional characteristic functions introduced in \cite{Wang_19}, strict analysis of their root negativity requires great caution, 
as pointed out in \cite{KT23, NNThang}.
A powerful tool for simplifying the analysis of fractional characteristic equations is the sub-linear of Mittag-Leffler functions, as presented in Lemma \ref{Est-ML} below. Our main contributions in this article are twofold: 

\begin{itemize}
\item
Taking note of the sub-linear of the Mittag-Leffler functions, we introduce a new characteristic equation that is much simpler compared to the previous one, making it clear and concise to analyze the strict negativity of its roots. This leads to Mittag-Leffler stability estimation of the solution.

\item 
The coupled fractional Halanay inequality we consider in this article is very general, and many of the relevant results in the literature can be regarded as special cases of it.  As a typical application, we establish the dissipativity and contractivity of solutions to fractional neutral functional differential equations.
\end{itemize}

We close this section by introducing some concepts and definitions that will be used throughout the article. Let $\N, \R, \, \R_{\ge0}$ be the set of natural numbers, real numbers, and nonnegative real numbers, respectively. Let  $d\in\N$ and $\R^d$ stands for the $d$-dimensional real Euclidean space with the standard inner product $\langle\cdot,\cdot\rangle$ and norm $\|\cdot\|$. Denote by $\R^d_{\geq 0}$ the set of all vectors in $\R^d$ with nonnegative entries, that is, $\R_{\geq 0}^d=\left\{x=(x_1,\dots,x_d)^{\rm T}\in \R^d:x_i\ge0,\ 1\le i\le d\right\}$.
For any $m,n\in \N$, let $\R^{m\times n}$ denote the set of matrices with  with $m$ rows and $n$ columns in $\R$. Given a closed interval $J\subset \R$ and $X$ is a subset of $\R^d$, we define $C(J; X)$ as the set of all continuous functions from $J$ to $X$.

    For $\alpha \in (0,1]$ and $T>0$,  the Riemann--Liouville fractional integral of a function $x:[0,T] \rightarrow \mathbb R$ is defined by 
\[
I^\alpha_{0^+}x(t) := \frac{1}{\Gamma(\alpha)}\int_{0}^{t}(t-s)^{\alpha -1}x(s)ds,\;t\in (0,T], 
\]
	and its Caputo fractional derivative of the order $\alpha$ as $ {D^\alpha_{0^+}}x(t) := \frac{d}{dt}I^{1-\alpha}_{0^+}(x(t)-x(0)), t\in (0,T]$, 
	where $\Gamma(\cdot)$ is the Gamma function and $\frac{d}{dt}$ is the usual derivative (see, e.g., 
	\cite[Chapters 2 and 3]{Kai} and \cite{Vainikko_16} for more detail on fractional calculus). For $d\in\N$ and a vector-valued function $x(\cdot)$ in $\R^d,$ we use the notation ${D^{\alpha}_{0^+}}x(t):=({D^{\alpha}_{0^+}}x_1(t),\dots,{D^{\alpha}_{0^+}}x_d(t))^{\rm T}.$

\section{Generalized Fractional Halanay Inequality}

\subsection{Preliminary}
We provide a summary of the key preparation results needed for the subsequent analysis presented throughout the paper.
Let $\alpha,\beta>0$. The Mittag-Leffler function $E_{\alpha,\beta}(\cdot):\R\rightarrow \R$ is defined by
\[
E_{\alpha,\beta}(t)=\sum_{k=0}^\infty \frac{t^k}{\Gamma(\alpha k+\beta)},\;\forall t\in \R.
\]
In the case $\beta=1$, for simplicity we use convention $E_\alpha(t):=E_{\alpha,1}(t)$ for all $t\in\R$.

\begin{lemma} \cite{Gorenflo_B} \label{ML1}
	The following statements hold. 
	\begin{itemize}
		\item[(i)] For $\alpha>0$ and $t\in\R$, then $E_{\alpha}(t)>0,\ E_{\alpha,\alpha}(t)>0$.
		\item[(ii)] For $\alpha>0$ and $t\in \R$, then $\displaystyle\frac{d}{dt}E_{\alpha}(t)=\frac{1}{\alpha}E_{\alpha,\alpha}(t)$.
  \item[(iii)] Let $\alpha\in (0,1)$, for all $t>0$, then $^{\!C}D^{\alpha}_{0^+}E_{\alpha}(\lambda t^\alpha)=\lambda E_{\alpha}(\lambda t^\alpha),\;\forall \lambda\in \R$.
		\item[(iv)] Let $\alpha>0$, then $E_{\alpha}(-t)>E_{\alpha}(-s)$ for all $0\leq t<s.$ 
	\end{itemize}
\end{lemma}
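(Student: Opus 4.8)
The plan is to dispatch the four items in roughly increasing order of depth, using the defining power series of $E_{\alpha,\beta}$ directly for (ii) and (iii) and the classical Laplace (Pollard-type) integral representation of $E_\alpha$ for (i) and (iv). Throughout I use that $E_{\alpha,\beta}$ is entire, so its series may be differentiated term by term and interchanged with integral operators on compact subsets of $\R$.

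For (ii) I would differentiate $E_\alpha(t)=\sum_{k\ge0}t^k/\Gamma(\alpha k+1)$ term by term to get $\frac{d}{dt}E_\alpha(t)=\sum_{k\ge1}k\,t^{k-1}/\Gamma(\alpha k+1)$, reindex $j=k-1$, and use the functional equation $\Gamma(\alpha j+\alpha+1)=(\alpha j+\alpha)\Gamma(\alpha j+\alpha)=\alpha(j+1)\Gamma(\alpha j+\alpha)$; the coefficient of $t^j$ then collapses to $\frac1\alpha\cdot\frac1{\Gamma(\alpha j+\alpha)}$, i.e.\ $\frac1\alpha$ times the $j$-th coefficient of $E_{\alpha,\alpha}$. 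For (iii), writing $E_\alpha(\lambda t^\alpha)=1+\sum_{k\ge1}\lambda^k t^{\alpha k}/\Gamma(\alpha k+1)$ and applying ${}^{\!C}D^\alpha_{0^+}$ termwise (legitimate because, after subtracting the constant term, one may bring the operator $\frac{d}{dt}\,I^{1-\alpha}_{0^+}$ inside the locally uniformly convergent sum), the power rule ${}^{\!C}D^\alpha_{0^+}t^{\alpha k}=\frac{\Gamma(\alpha k+1)}{\Gamma(\alpha(k-1)+1)}t^{\alpha(k-1)}$ turns the $k$-th term into $\lambda^k t^{\alpha(k-1)}/\Gamma(\alpha(k-1)+1)$; factoring out one $\lambda$ and reindexing yields exactly $\lambda E_\alpha(\lambda t^\alpha)$.

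The substantive part is (i) and (iv). For $t\ge0$ every term of the series for $E_\alpha(t)$ and for $E_{\alpha,\alpha}(t)$ is nonnegative with strictly positive leading term, so positivity there is immediate; the issue is $t<0$, where the series alternates. Here I would invoke the representation valid for $0<\alpha\le1$: for $x>0$,
\[
E_\alpha(-x)=\int_0^\infty e^{-xr}\,\kappa_\alpha(r)\,\du r,\qquad \kappa_\alpha(r)=\frac{\sin(\pi\alpha)}{\pi}\cdot\frac{r^{\alpha-1}}{r^{2\alpha}+2r^\alpha\cos(\pi\alpha)+1}>0 ,
\]
together with the analogous formula for $E_{\alpha,\alpha}(-x)$ with a strictly positive density. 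Positivity of the density yields $E_\alpha(-x)>0$ and $E_{\alpha,\alpha}(-x)>0$, hence (i); and since for each fixed $r>0$ the factor $e^{-xr}$ is strictly decreasing in $x$ while $\kappa_\alpha(r)>0$, comparing the integrals at $x=t$ and $x=s$ (or differentiating under the integral sign) gives $E_\alpha(-t)>E_\alpha(-s)$ for $0\le t<s$, which is (iv). As a shortcut, (iv) also follows from (i) and (ii): $\frac{d}{dt}E_\alpha(-t)=-\frac1\alpha E_{\alpha,\alpha}(-t)<0$.

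I expect the one genuine obstacle to be precisely this last point. The manipulations behind (ii) and (iii) are purely formal once convergence is noted, but positivity and monotonicity of $E_\alpha$ at negative arguments rest on the complete monotonicity of $t\mapsto E_\alpha(-t)$ (equivalently, the integral representation above), which is available only for $\alpha\in(0,1]$; indeed the bare inequality in (i) already fails for $\alpha=2$, where $E_2(-x)=\cos\sqrt{x}$ changes sign. Consequently, in a self-contained write-up I would either restrict (i) and (iv) to $\alpha\in(0,1]$ — the range relevant to every application in this paper — or cite \cite{Gorenflo_B} for those two facts while giving the series proofs of (ii) and (iii) in full.
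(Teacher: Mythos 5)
The paper does not prove this lemma at all: it is quoted verbatim with a citation to \cite{Gorenflo_B}, so there is no in-paper argument to compare against, and your proposal supplies a proof where the authors supply only a reference. Your sketch is sound. Items (ii) and (iii) are correctly handled by termwise manipulation of the entire series, using $\Gamma(\alpha j+\alpha+1)=\alpha(j+1)\Gamma(\alpha j+\alpha)$ for (ii) and the power rule ${}^{\!C}D^{\alpha}_{0^+}t^{\alpha k}=\frac{\Gamma(\alpha k+1)}{\Gamma(\alpha(k-1)+1)}t^{\alpha(k-1)}$ together with the vanishing of the Caputo derivative on constants for (iii). Your most valuable observation is that (i) and (iv), as literally stated for all $\alpha>0$, are false: $E_2(-x)=\cos\sqrt{x}$ changes sign and is not monotone, so the negative-argument assertions genuinely require $\alpha\in(0,1]$ and rest on the complete monotonicity of $t\mapsto E_\alpha(-t)$ (Pollard), exactly as you say; since the paper only ever uses $\alpha\in(0,1)$, this is a harmless but real imprecision in the cited statement. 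One small point to tidy in a full write-up: the explicit spectral density $\kappa_\alpha(r)=\frac{\sin(\pi\alpha)}{\pi}\,\frac{r^{\alpha-1}}{r^{2\alpha}+2r^{\alpha}\cos(\pi\alpha)+1}$ vanishes identically at $\alpha=1$, where the spectral measure degenerates to a Dirac mass at $r=1$ and $E_1(-x)=e^{-x}$; so treat the endpoint $\alpha=1$ separately rather than claiming the integral formula for $0<\alpha\le 1$. With that caveat, and noting that your shortcut $\frac{d}{dt}E_\alpha(-t)=-\frac{1}{\alpha}E_{\alpha,\alpha}(-t)<0$ for (iv) requires the positivity of $E_{\alpha,\alpha}$ at negative arguments (itself a complete-monotonicity fact for $\alpha\in(0,1)$), the argument is complete.
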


The following result offers a useful estimate for classical Mittag-Leffler functions, known as their sub-additive property.

\begin{lemma} \cite[Lemma 3.2]{NNThang} \label{Est-ML}
For $\lambda>0$ and $t,s\ge0$, we have
\[E_{\alpha}(-\lambda t^{\alpha})E_{\alpha}(-\lambda s^{\alpha})\le E_{\alpha}(-\lambda (t+s)^{\alpha}).\]
In addition, if $\lambda>0$, $t,s\geq 0$ and $ts>0$, then
\[E_{\alpha}(-\lambda t^{\alpha})E_{\alpha}(-\lambda s^{\alpha})< E_{\alpha}(-\lambda (t+s)^{\alpha}).\]
\end{lemma}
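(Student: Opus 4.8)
The plan is to prove the sub-additive inequality
\[
E_{\alpha}(-\lambda t^{\alpha})\,E_{\alpha}(-\lambda s^{\alpha})\le E_{\alpha}\bigl(-\lambda (t+s)^{\alpha}\bigr)
\]
by exploiting the fact that $u(t):=E_{\alpha}(-\lambda t^{\alpha})$ is, up to the usual caveats about the Caputo derivative, the solution of the scalar linear fractional initial value problem $\DC^{\alpha}_{0^+}u(t)=-\lambda u(t)$, $u(0)=1$ (this is precisely Lemma~\ref{ML1}(iii)). First I would fix $s\ge 0$ and set $v(t):=E_{\alpha}\bigl(-\lambda(t+s)^{\alpha}\bigr)/E_{\alpha}(-\lambda s^{\alpha})$; the goal is to show $u(t)\le v(t)$ for all $t\ge 0$, since at $t=0$ both sides equal $1$. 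The natural route is a comparison/monotonicity argument: show that $w(t):=v(t)-u(t)$ is nonnegative by deriving a fractional differential inequality for $w$ and invoking a comparison principle for the Caputo derivative, or equivalently by rewriting everything as Volterra integral equations with the nonnegative kernel $t^{\alpha-1}/\Gamma(\alpha)$ and iterating.

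More concretely, the second approach I would carry out: both $u$ and $E_{\alpha}(-\lambda(\cdot+s)^{\alpha})$ satisfy a Volterra integral equation. For $u$ we have $u(t)=1-\lambda I^{\alpha}_{0^+}u(t)$. For the shifted function, one has to be careful — the Caputo derivative with base point $0$ of $t\mapsto E_{\alpha}(-\lambda(t+s)^{\alpha})$ is \emph{not} simply $-\lambda E_{\alpha}(-\lambda(t+s)^{\alpha})$, because the eigenfunction identity only holds with the base point matching the shift. So instead I would use the known complete-monotonicity of $t\mapsto E_{\alpha}(-t)$ on $[0,\infty)$ (Pollard's theorem): $E_{\alpha}(-x)=\int_0^{\infty}e^{-xr}\,d\mu_{\alpha}(r)$ for a probability measure $\mu_{\alpha}$ on $[0,\infty)$. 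Then
\[
E_{\alpha}(-\lambda t^{\alpha})=\int_0^{\infty} e^{-r\lambda t^{\alpha}}\,d\mu_{\alpha}(r),
\]
and the inequality would follow pointwise in $r$ if we knew $e^{-r\lambda t^{\alpha}}e^{-r\lambda s^{\alpha}}\le e^{-r\lambda(t+s)^{\alpha}}$, i.e. $t^{\alpha}+s^{\alpha}\ge (t+s)^{\alpha}$ — which is the elementary super-additivity of $x\mapsto x^{\alpha}$ for $\alpha\in(0,1)$. However, the product of two integrals is not the integral of the product, so a direct pointwise argument does not immediately close; I would instead combine the subordination representation with the observation that $E_\alpha(-\lambda\,\cdot\,^\alpha)$ is itself completely monotone in the variable $t$ and appeal to the fact that $t^\alpha+s^\alpha\ge(t+s)^\alpha$ together with monotonicity (Lemma~\ref{ML1}(iv), decreasing) gives $E_\alpha(-\lambda(t^\alpha+s^\alpha))\le E_\alpha(-\lambda(t+s)^\alpha)$, reducing everything to the cleaner claim
\[
E_{\alpha}(-\lambda t^{\alpha})\,E_{\alpha}(-\lambda s^{\alpha})\le E_{\alpha}\bigl(-\lambda(t^{\alpha}+s^{\alpha})\bigr).
\]

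This last claim is exactly a statement about the Laplace-transform-type function $f(x):=E_{\alpha}(-\lambda x)$, namely $f(t^\alpha)f(s^\alpha)\le f(t^\alpha+s^\alpha)$; writing $a=t^\alpha$, $b=s^\alpha$ it is the multiplicative-superadditivity $f(a)f(b)\le f(a+b)$ for a completely monotone $f$ with $f(0)=1$. I would prove this by differentiating $g(a):=\log f(a+b)-\log f(a)$ in $a$ and showing $g'(a)\ge 0$, which amounts to $-\,\frac{f'(a+b)}{f(a+b)}\ge -\,\frac{f'(a)}{f(a)}$, i.e. the logarithmic derivative of $E_\alpha(-\lambda x)$ is nondecreasing in $x$; this in turn follows from the classical fact that $-\log E_\alpha(-\lambda x)$ is concave — equivalently that $E_\alpha(-x)$ is log-convex on $[0,\infty)$, a known property of this completely monotone function (its reciprocal is absolutely monotone, hence $E_\alpha(-x)$ is log-convex). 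The strict inequality when $ts>0$ then comes from strict log-convexity / strict monotonicity of Lemma~\ref{ML1}(iv) on $(0,\infty)$, since $t^\alpha+s^\alpha>(t+s)^\alpha$ strictly when $\alpha\in(0,1)$ and $t,s>0$.

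The main obstacle I anticipate is justifying the log-convexity of $x\mapsto E_{\alpha}(-x)$ cleanly and with a self-contained argument, rather than quoting it; the subordination/Pollard representation gives complete monotonicity for free but log-convexity needs the extra structure (for a completely monotone $f$, $\log f$ is convex iff the representing measure has a log-convex-type property, which does hold here but requires the explicit Pollard density or an ODE argument). If a fully elementary route is wanted, the fallback is the fractional-ODE comparison argument sketched first: show $w=v-u$ satisfies $\DC^\alpha_{0^+}w \ge -\lambda w$ with $w(0)=0$ — using that $v$ is a supersolution, which is where the estimate $t^\alpha+s^\alpha\ge(t+s)^\alpha$ enters — and then apply the standard positivity/comparison principle for scalar Caputo fractional differential inequalities to conclude $w\ge0$. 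Either way the arithmetic inequality $t^\alpha+s^\alpha\ge(t+s)^\alpha$ for $\alpha\in(0,1)$, strict for $t,s>0$, is the combinatorial heart of the estimate, and the analytic work is all about transporting it through the Mittag-Leffler function while preserving the inequality direction.
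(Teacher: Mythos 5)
The paper never proves this lemma --- it is imported verbatim by citation to \cite[Lemma 3.2]{NNThang} --- so there is no in-paper argument to compare against; the question is only whether your self-contained proof closes, and in substance it does. Your final route (reduce to $E_{\alpha}(-\lambda t^{\alpha})E_{\alpha}(-\lambda s^{\alpha})\le E_{\alpha}(-\lambda(t^{\alpha}+s^{\alpha}))$ via the elementary inequality $(t+s)^{\alpha}\le t^{\alpha}+s^{\alpha}$ together with the monotonicity in Lemma~\ref{ML1}(iv), then prove supermultiplicativity of $f(x)=E_{\alpha}(-\lambda x)$ from its log-convexity) is sound, and the strict case follows exactly as you say because $(t+s)^{\alpha}<t^{\alpha}+s^{\alpha}$ is strict for $t,s>0$ and $\alpha\in(0,1)$ while $E_{\alpha}(-\cdot)$ is strictly decreasing (note that for $\alpha=1$ both sides equal $e^{-\lambda(t+s)}$, so the strict clause tacitly assumes $\alpha<1$). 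Two corrections. First, the ``main obstacle'' you anticipate is not one: \emph{every} completely monotone function is log-convex, with no condition whatsoever on the representing measure. From Pollard's theorem, $f(x)=\int_0^{\infty}e^{-\lambda xr}\,d\mu_{\alpha}(r)$, and H\"older's inequality gives $f(\theta x+(1-\theta)y)=\int_0^{\infty}(e^{-\lambda xr})^{\theta}(e^{-\lambda yr})^{1-\theta}\,d\mu_{\alpha}(r)\le f(x)^{\theta}f(y)^{1-\theta}$. Moreover, once $\phi:=\log f$ is known to be convex you do not even need the derivative computation: writing $a$ and $b$ as convex combinations of $0$ and $a+b$ and adding the two convexity inequalities yields $\phi(a)+\phi(b)\le\phi(0)+\phi(a+b)$, which with $f(0)=1$ is precisely $f(a)f(b)\le f(a+b)$. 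Second, there is a sign slip in your displayed condition: $g'(a)\ge 0$ requires $f'(a+b)/f(a+b)\ge f'(a)/f(a)$, i.e.\ $\log f$ convex, not the inequality with the leading minus signs as written; since the property you ultimately invoke (log-convexity of $E_{\alpha}(-x)$) is the correct one, this is cosmetic. The abandoned comparison-principle attempt and the observation that the product of two integrals is not the integral of the product are accurate but could be cut from a final write-up.
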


\begin{lemma} \cite[Lemma 25]{Cong-Tuan-Hieu} \label{compare-FDE}
Let $x : [0, T] \rightarrow \R$ be continuous and the Caputo fractional derivative
$^{\!C}D^{\alpha}_{0^+}x(t)$ exists on the interval $(0, T]$. If there exists $t_1\in (0,T]$ such that $x(t_1)=0$ and $x(t)<0,\ \forall t\in [0,t_1)$, then $$^{\!C}D^{\alpha}_{0^+}x(t_1)\geq 0.$$
\end{lemma}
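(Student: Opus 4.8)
The plan is to deduce the inequality from the pointwise \emph{nonlocal representation} of the Caputo derivative. Assume first $\alpha\in(0,1)$. Formally — and rigorously whenever $x$ is, say, $C^1$ near $t_1$ — one starts from ${}^{C}D^{\alpha}_{0^+}x(t_1)=\frac{1}{\Gamma(1-\alpha)}\int_0^{t_1}(t_1-s)^{-\alpha}x'(s)\diff s$, integrates by parts after writing $x'(s)\diff s=\diff\bigl(x(s)-x(t_1)\bigr)$ (the boundary term at $s=t_1$ vanishing since for $C^1$ data $(t_1-s)^{-\alpha}\bigl(x(s)-x(t_1)\bigr)=O\bigl((t_1-s)^{1-\alpha}\bigr)\to 0$), and obtains
\[
{}^{C}D^{\alpha}_{0^+}x(t_1)=\frac{1}{\Gamma(1-\alpha)}\left(\frac{x(t_1)-x(0)}{t_1^{\alpha}}+\alpha\int_0^{t_1}\frac{x(t_1)-x(s)}{(t_1-s)^{1+\alpha}}\diff s\right).
\]

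Granting this identity, the conclusion is immediate: by hypothesis $x(t_1)=0$ and $x(s)<0$ for every $s\in[0,t_1)$, so $x(t_1)-x(0)=-x(0)>0$ and the integrand $x(t_1)-x(s)=-x(s)>0$ on $[0,t_1)$; since $\Gamma(1-\alpha)>0$, both terms inside the parentheses are positive and hence ${}^{C}D^{\alpha}_{0^+}x(t_1)>0\ge 0$. (In the boundary case $\alpha=1$ the statement is the elementary observation that a function which is negative on $[0,t_1)$ and vanishes at $t_1$ has nonnegative left derivative at $t_1$, hence nonnegative derivative wherever that derivative exists.)

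The real work — and the step I expect to be the main obstacle — is to justify the nonlocal representation under the weak regularity actually assumed: $x$ is merely continuous, and ${}^{C}D^{\alpha}_{0^+}x$ is known to exist only at the single point $t_1$, so the naive integration by parts is not licensed, and one must even verify that the hypersingular integral $\int_0^{t_1}(t_1-s)^{-1-\alpha}\bigl(x(t_1)-x(s)\bigr)\diff s$ converges — continuity alone only gives $x(t_1)-x(s)=o(1)$ near $s=t_1$, which need not dominate $(t_1-s)^{-1-\alpha}$. Here the one-sidedness of the sign condition helps: since $x(t_1)-x(s)=-x(s)\ge 0$ on $[0,t_1)$, that integral is the integral of a nonnegative function, so it either converges or equals $+\infty$. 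I would exclude the second alternative by showing it forces the difference quotient $\tfrac{1}{h}\bigl(I^{1-\alpha}_{0^+}(x-x(0))(t_1)-I^{1-\alpha}_{0^+}(x-x(0))(t_1-h)\bigr)$ to diverge as $h\to0^+$, contradicting the assumed existence of ${}^{C}D^{\alpha}_{0^+}x(t_1)$; once the integral is finite, the representation follows by a monotone/dominated passage to the limit, or alternatively by a regularization argument (establish it for $x\in C^1$ as above, approximate $x$, and use the existence of ${}^{C}D^{\alpha}_{0^+}x(t_1)$ to pass to the limit). With the representation in hand the sign count is trivial, so essentially all the difficulty sits in this convergence-and-representation step.
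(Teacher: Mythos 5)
The paper does not prove this statement at all --- it is imported verbatim from \cite[Lemma 25]{Cong-Tuan-Hieu} --- so there is no in-paper argument to measure you against; your proposal has to stand on its own. Your core idea (the Marchaud-type pointwise form of the Caputo derivative plus a sign count) is the standard route to this kind of ``maximum principle'' lemma, your derivation of the identity for $C^1$ data is correct, and the sign argument granted the identity is airtight. The problem is that you have, by your own admission, only proved the lemma under regularity the statement does not assume, and the repair you sketch for the general case contains a claim that is not as routine as you suggest.

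Concretely: writing $\psi(s)=-x(s)\ge 0$ and expanding the left difference quotient of $I^{1-\alpha}_{0^+}(x-x(0))$ at $t_1$, one gets (up to the harmless $-x(0)t_1^{-\alpha}$ contribution) the quantity $\bigl(A(h)-B(h)\bigr)/h$ with
\[
A(h)=\int_0^{t_1-h}\bigl[(t_1-h-s)^{-\alpha}-(t_1-s)^{-\alpha}\bigr]\psi(s)\,\mathrm{d}s\ge 0,
\qquad
B(h)=\int_{t_1-h}^{t_1}(t_1-s)^{-\alpha}\psi(s)\,\mathrm{d}s\ge 0 .
\]
Your plan is to say that divergence of $\int_0^{t_1}(t_1-s)^{-1-\alpha}\psi(s)\,\mathrm{d}s$ forces the difference quotient to diverge. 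But when that hypersingular integral diverges, \emph{both} $A(h)/h$ and $B(h)/h$ can tend to $+\infty$ (e.g.\ $\psi(s)\sim (t_1-s)^{\beta}$ with $0<\beta\le\alpha$ gives $B(h)/h\sim h^{\beta-\alpha}$), so the difference quotient is a competition between two divergent nonnegative terms, and showing that $A$ dominates $B$ --- equivalently that $\liminf_{h\to 0^+}(A(h)-B(h))/h\ge 0$, which is really all the lemma needs --- requires an actual scaling computation (for power-law $\psi$ it reduces to checking positivity of $\int_0^\infty w^{-\alpha}\bigl[(1+w)^\beta-w^\beta\bigr]\mathrm{d}w$, and a general continuous $\psi$ needs more than that). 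Neither this comparison nor the subsequent limit passage establishing the representation for merely continuous $x$ is carried out in your proposal; since you yourself locate ``essentially all the difficulty'' in exactly this step, what you have is a correct proof for smooth data together with a plausible but unexecuted plan for the case the lemma actually covers. The honest fix is either to supply that dominance estimate or to do what the authors do and cite \cite[Lemma 25]{Cong-Tuan-Hieu} (or Vainikko's characterization of fractional differentiability) for the representation under minimal regularity.
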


\subsection{Generalized Fractional Halanay Inequality}

This section is devoted to prove the main result of the paper. In particular, we develop a generalized Halanay-type coupled inequality within the framework of fractional calculus.
\begin{theorem} \label{Theorem-Hala}
	Let $\tau>0$ and $u,v:[-\tau,+\infty) \rightarrow \R_{\geq 0}$ be continuous functions. Suppose that $a,b,c,d,e,f$ are non-negative parameters such that $0\le d<1,b\ge 0,c\ge 0,a>b+\displaystyle\frac{ec}{1-d}>0.$ Assume that the Caputo fractional derivative
	$^{\!C}D^{\alpha}_{0^+}u(t)$ exists on the interval $(0, \infty)$ and 
	\begin{align} 
			^{\!C}D^{{\alpha}}_{0+}u(t)&\le -au(t)+b\displaystyle\sup_{0\le s_1\le \tau_1}u(t-s_1)+e\sup_{0\le s_2\le \tau_2}v(t-s_2)+f,\; t\in (0,\infty),\label{E0_1}\\
			v(t)&\le \hspace{0.3cm}cu(t)+d\displaystyle\sup_{0\le s_3\le \tau_3}v(t-s_3),\; t\in [0,\infty),\label{E0_2}\\
			u(\cdot)&=\varphi(\cdot),\;v(\cdot)=\psi(\cdot)\;\text{on}\; [-\tau,0],\label{E0_3}
	\end{align}
where $\varphi,\ \psi:[-\tau,0] \rightarrow \R_{\geq 0}$ are given continuous functions 
and $\tau_1,\tau_2,\tau_3$ are positive delays and $\tau=\max\left\{\tau_1,\tau_2,\tau_3\right\}$. Then, there are positive constants $M>0, \lambda^*>0$ satisfying
\begin{align} 
	u(t)&\leq ME_{\alpha}(-\lambda^*t^\alpha)+\frac{(1-d)f}{(1-d)(a-b)-ce},\label{est_hala_1}\\
v(t)&\leq \frac{Mc}{E_{\alpha}(-\lambda^*\tau_3^\alpha)-d}E_{\alpha}(-\lambda^*t^\alpha)+\frac{cf}{(1-d)(a-b)-ce}\label{est_hala_2}
\end{align}
for all $t\ge 0$, where $\lambda^*$ is a unique positive solution of the following equation 
\begin{align} \label{eq:hfun}
h(\lambda):=\lambda-a+\frac{b}{E_{\alpha}(-\lambda \tau_1^\alpha)}+\frac{ec}{E_{\alpha}(-\lambda \tau_3^\alpha)-d}\times\frac{1}{E_{\alpha}(-\lambda \tau_2^\alpha)}=0.
\end{align}
\end{theorem}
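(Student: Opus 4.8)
The plan is to establish \eqref{est_hala_1}--\eqref{est_hala_2} by a single ``first crossing'' comparison argument against the two barrier functions suggested by the right-hand sides, with Lemma~\ref{Est-ML} providing the mechanism that turns the delayed suprema into the factors $1/E_{\alpha}(-\lambda^*\tau_i^\alpha)$ appearing in \eqref{eq:hfun}. \emph{Step 1 (the root $\lambda^*$).} Since $\lambda\mapsto E_{\alpha}(-\lambda\tau_3^\alpha)$ is continuous and strictly decreasing from $1$ down to $0$ (Lemma~\ref{ML1}(i),(iv)), there is a unique $\bar\lambda\in(0,+\infty]$ with $E_{\alpha}(-\bar\lambda\tau_3^\alpha)=d$ (take $\bar\lambda=+\infty$ if $d=0$), and on $[0,\bar\lambda)$ each of $\lambda$, $1/E_{\alpha}(-\lambda\tau_1^\alpha)$, $1/E_{\alpha}(-\lambda\tau_2^\alpha)$, $1/(E_{\alpha}(-\lambda\tau_3^\alpha)-d)$ is nonnegative and nondecreasing, the first strictly so. Hence $h$ is continuous and strictly increasing on $[0,\bar\lambda)$; since $h(0)=b+\tfrac{ec}{1-d}-a<0$ by hypothesis and $h(\lambda)\to+\infty$ as $\lambda\uparrow\bar\lambda$ (the term $\tfrac{ec}{E_{\alpha}(-\lambda\tau_3^\alpha)-d}$ blows up, the degenerate case $ec=0$ being even simpler since then that term is identically zero and $h(\lambda)\to+\infty$ as $\lambda\to\infty$), there is a unique root $\lambda^*\in(0,\bar\lambda)$. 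In particular $E_{\alpha}(-\lambda^*\tau_3^\alpha)-d>0$, and since $(1-d)(a-b)-ce>0$ is just the hypothesis rewritten, all the constants in \eqref{est_hala_1}--\eqref{est_hala_2} are well defined and nonnegative.

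\emph{Step 2 (barriers and the core estimate).} Write $P:=\tfrac{(1-d)f}{(1-d)(a-b)-ce}$ and $Q:=\tfrac{cf}{(1-d)(a-b)-ce}$ for the two constants in the conclusion; a direct computation gives the ``steady-state'' identities $(b-a)P+eQ+f=0$ and $cP+dQ=Q$. Fix $M>0$ so large that $M\ge\sup_{[-\tau,0]}\varphi$ and $N:=\tfrac{cM}{E_{\alpha}(-\lambda^*\tau_3^\alpha)-d}\ge\sup_{[-\tau,0]}\psi$, and for $\epsilon>0$ put $\bar u_\epsilon(t):=(M+\epsilon)E_{\alpha}(-\lambda^*t^\alpha)+P+\epsilon$ and $\bar v_\epsilon(t):=NE_{\alpha}(-\lambda^*t^\alpha)+Q+\tfrac{c}{1-d}\epsilon$ for $t\ge0$; by Lemma~\ref{ML1}(iii), $^{\!C}D^{\alpha}_{0^+}\bar u_\epsilon(t)=-(M+\epsilon)\lambda^*E_{\alpha}(-\lambda^*t^\alpha)$. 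The core observation is: if $u\le\bar u_\epsilon$ and $v\le\bar v_\epsilon$ hold on $[0,t]$, then Lemma~\ref{Est-ML} used in the form $E_{\alpha}(-\lambda^*(t-s)^\alpha)\le E_{\alpha}(-\lambda^*t^\alpha)/E_{\alpha}(-\lambda^*s^\alpha)$, together with $E_{\alpha}(-\lambda^*s^\alpha)\ge E_{\alpha}(-\lambda^*\tau_i^\alpha)$ for $s\le\tau_i$ (Lemma~\ref{ML1}(iv)) and the choice of $M$, yields, for each delay $\tau_i$ appearing in \eqref{E0_1}--\eqref{E0_2},
\[
\sup_{0\le s\le\tau_i}u(t-s)\le\frac{(M+\epsilon)E_{\alpha}(-\lambda^*t^\alpha)}{E_{\alpha}(-\lambda^*\tau_i^\alpha)}+P+\epsilon,\qquad
\sup_{0\le s\le\tau_i}v(t-s)\le\frac{NE_{\alpha}(-\lambda^*t^\alpha)}{E_{\alpha}(-\lambda^*\tau_i^\alpha)}+Q+\tfrac{c}{1-d}\epsilon ;
\]
the parts of these suprema coming from $\varphi,\psi$ on $[-\tau,0]$ are harmless because a negative argument $t-s<0$ forces $t<\tau_i$, whence $E_{\alpha}(-\lambda^*t^\alpha)>E_{\alpha}(-\lambda^*\tau_i^\alpha)$.

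\emph{Step 3 (crossing argument).} Suppose the conclusion fails for some $\epsilon>0$ and set $t_1:=\inf\{t>0:\ u(t)\ge\bar u_\epsilon(t)\ \text{or}\ v(t)\ge\bar v_\epsilon(t)\}$. Strictness of both inequalities at $t=0$ (from the choice of $M$ and $\epsilon>0$) and continuity give $t_1\in(0,\infty)$, both inequalities strict on $[0,t_1)$, and at least one equality at $t_1$. If $v(t_1)=\bar v_\epsilon(t_1)$: substituting $u(t_1)\le\bar u_\epsilon(t_1)$ and the Step~2 bound for $\sup_{0\le s_3\le\tau_3}v(t_1-s_3)$ into \eqref{E0_2} and using $cP+dQ=Q$ and the definition of $N$, all terms cancel except $E_{\alpha}(-\lambda^*t_1^\alpha)\big(cM\,\tfrac{1-E_{\alpha}(-\lambda^*\tau_3^\alpha)}{E_{\alpha}(-\lambda^*\tau_3^\alpha)}-c\epsilon\big)\le0$, which is impossible once $\epsilon$ is small. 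If $u(t_1)=\bar u_\epsilon(t_1)$: applying Lemma~\ref{compare-FDE} to $z:=u-\bar u_\epsilon$ on $[0,t_1]$ (note $z(t_1)=0$ and $z<0$ on $[0,t_1)$) gives $^{\!C}D^{\alpha}_{0^+}u(t_1)\ge -(M+\epsilon)\lambda^*E_{\alpha}(-\lambda^*t_1^\alpha)$, while feeding the Step~2 bounds into \eqref{E0_1} and using $(b-a)P+eQ+f=0$ together with $h(\lambda^*)=0$ yields an upper bound for $^{\!C}D^{\alpha}_{0^+}u(t_1)$ that, combined with the lower bound and divided by $\epsilon>0$, forces $0\le\big(b-a+\tfrac{ec}{1-d}\big)-\kappa\,E_{\alpha}(-\lambda^*t_1^\alpha)$ with $\kappa\ge0$ --- contradicting $a>b+\tfrac{ec}{1-d}$. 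So no such $t_1$ exists, i.e. $u\le\bar u_\epsilon$ and $v\le\bar v_\epsilon$ on $[0,\infty)$ for every $\epsilon>0$; letting $\epsilon\to0^+$ yields \eqref{est_hala_1}--\eqref{est_hala_2}.

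\emph{Main obstacle.} The genuine coupling of $u$ and $v$ is what makes this non-routine: the crossing argument must be run for both barriers at once, and the two ``no-crossing'' requirements pull the perturbation added to $\bar v_\epsilon$ in opposite directions --- killing a $v$-crossing needs it at least $\tfrac{c}{1-d}\epsilon$, while killing a $u$-crossing needs it below $\tfrac{a-b}{e}\epsilon$. The choice $\tfrac{c}{1-d}\epsilon$ lies in this window precisely because $\tfrac{ce}{1-d}<a-b$, i.e. under the standing hypothesis $a>b+\tfrac{ec}{1-d}$. Getting this bookkeeping to close, and routing the delayed initial data cleanly through the sub-additivity of $E_{\alpha}$, is where the care is needed; the degenerate cases in which some of $b,c,e$ vanish follow the same scheme with obvious simplifications.
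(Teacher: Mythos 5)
Your proposal is correct, and its skeleton is the same as the paper's: a first-crossing contradiction argument driven by Lemma~\ref{compare-FDE}, the sub-additivity of $E_\alpha$ from Lemma~\ref{Est-ML} to convert delayed suprema into the factors $1/E_\alpha(-\lambda^*\tau_i^\alpha)$, and the characteristic function $h$. The one genuine difference is where the strict slack for the contradiction comes from. The paper perturbs the \emph{rate}: it runs the argument with barriers built from $E_\alpha(-(\lambda^*-\varepsilon)t^\alpha)$, so the $u$-crossing contradiction reads $h(\lambda^*-\varepsilon)<h(\lambda^*)=0$ and therefore needs the strict monotonicity of $h$ on $(0,\lambda_0)$ (whence the computation of $h'$). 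You instead keep the rate at $\lambda^*$ and perturb the \emph{additive constants} by $\epsilon$ and $\tfrac{c}{1-d}\epsilon$; your $u$-crossing contradiction reduces, after the exact cancellation $h(\lambda^*)=0$, to $0\le\epsilon\bigl(b-a+\tfrac{ec}{1-d}\bigr)$, i.e.\ to $h(0)<0$, so monotonicity of $h$ is only needed once (for uniqueness of the root), and your remark about the window $\bigl[\tfrac{c}{1-d}\epsilon,\tfrac{a-b}{e}\epsilon\bigr)$ correctly identifies why the standing hypothesis is exactly what makes the two crossing cases compatible. A second simplification: by observing that $t-s<0$ forces $t<\tau_i$ and hence $E_\alpha(-\lambda^*t^\alpha)>E_\alpha(-\lambda^*\tau_i^\alpha)$, you absorb the initial-data portion of each supremum into the same bound, replacing the paper's six-way case split on the position of $t_*$ relative to $\tau_1,\tau_2,\tau_3$ by a single uniform estimate. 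Two small caveats, neither fatal: your $v$-crossing contradiction needs $\epsilon<M\tfrac{1-E_\alpha(-\lambda^*\tau_3^\alpha)}{E_\alpha(-\lambda^*\tau_3^\alpha)}$ and (like the paper's own choice of $M$ in its display \eqref{icd_2}) implicitly assumes $c>0$ unless $\psi\equiv0$ --- a degeneracy already present in the theorem statement --- and you should state explicitly that the smallness condition on $\epsilon$ depends only on $M$ and $\lambda^*$, not on $t_1$, so that letting $\epsilon\to0^+$ at the end is legitimate.
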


\begin{proof}
	Define $h(\lambda):=\displaystyle\lambda-a+\frac{b}{E_{\alpha}(-\lambda \tau_1^\alpha)}+\frac{ec}{E_{\alpha}(-\lambda \tau_3^\alpha)-d}\times\frac{1}{E_{\alpha}(-\lambda \tau_2^\alpha)}$.
    By the properties of Mittag-Leffler function, it is easy to see there exists a unique positive constant $\lambda_0>0$ such that $ E_{\alpha}(-\lambda_0 \tau_3^\alpha)=d.$
	Notice that
	$$h(0)=-a+b+\frac{ec}{1-d}<0,\ \displaystyle\lim_{\lambda\to {\lambda_0}_{-}}h(\lambda)=+\infty.$$
	Moreover, $h(\cdot)$ is continuously differentiable on $(0,\lambda_0)$ and
	\begin{align*}
	h'(\lambda)=&1+\frac{b\tau_1^\alpha E_{\alpha,\alpha}(-\lambda \tau_1^\alpha)}{\alpha E_{\alpha}(-\lambda \tau_1^\alpha)^2}+\frac{\tau_3^\alpha E_{\alpha,\alpha}(-\lambda \tau_3^\alpha)}{\alpha\left(E_{\alpha}(-\lambda \tau_3^\alpha)-d\right)^2}\times\frac{ec}{E_{\alpha}(-\lambda \tau_2^\alpha)}\\
	&+\frac{ec}{E_{\alpha}(-\lambda \tau_3^\alpha)-d}\times\frac{\tau_2^\alpha E_{\alpha,\alpha}(-\lambda \tau_2^\alpha)}{\alpha E_{\alpha}(-\lambda \tau_2^\alpha)^2}>0	
	\end{align*}
	for all $\lambda\in (0,\lambda_0)$. Thus, $h(\cdot)$ is strictly increasing on $[0,\lambda_0)$ and the equation $h(\lambda)=0$ has a unique positive solution $\lambda=\lambda^*\in (0,\lambda_0)$. 
    
	Because $\varphi,\psi$ are continuous on $[-r,0]$, taking $M>0$ such that
	\begin{align}
 \sup_{0\le s\le \tau}\varphi(s-\tau)\leq M, \quad
 \sup_{0\le s\le \tau}\psi(s-\tau)\leq \frac{Mc}{E_{\alpha}(-\lambda^* \tau_3^\alpha)-d}.\label{icd_2}
 \end{align}
 Let $\varepsilon>0$ be small enough such that $\varepsilon\in (0,\lambda^*)$. We first show that for all $t\geq 0$
 \begin{align} 
	u(t)&< (M+\varepsilon)E_{\alpha}(-(\lambda^*-\varepsilon)t^\alpha)+\gamma_1, \label{est_hala_add_1}\\ 
 v(t)&< \frac{(M+\varepsilon)c}{E_{\alpha}(-(\lambda^*-\varepsilon)\tau_3^\alpha)-d}E_{\alpha}(-(\lambda^*-\varepsilon)t^\alpha)+\gamma_2,\label{est_hala_add_2}
\end{align}
where $\gamma_1,\gamma_2$ are non-negative parameters chosen later.
	We do this by contradiction. Due to $u(0)=\varphi(0)< M+\varepsilon$ and $v(0)=\psi(0)< \frac{(M+\varepsilon)c}{E_{\alpha}(-\lambda^* \tau_3^\alpha)-d}$, thus if the statements \eqref{est_hala_add_1}--\eqref{est_hala_add_2} are not true then there exists $t_*>0$ such that one of the following assertions holds.
	\begin{itemize}
		\item[(i)] \begin{align}
  u(t_*)&=(M+\varepsilon)E_{\alpha}(-(\lambda^*-\varepsilon)t^\alpha_*)+\gamma_1,\label{contra_1}\\
  u(t)&<(M+\varepsilon)E_{\alpha}(-(\lambda^*-\varepsilon)t^\alpha)+\gamma_1,\ \forall t\in [0,t_*),\label{contra_2}
  \end{align} and
		\begin{equation}\label{contra_3}
  v(t)\le \frac{(M+\varepsilon)c}{E_{\alpha}(-(\lambda^*-\varepsilon)\tau_3^\alpha)-d}E_{\alpha}(-(\lambda^*-\varepsilon)t^\alpha)+\gamma_2,\  \forall t\in[0,t_*].
  \end{equation}
		\item[(ii)] \begin{align}
		    v(t_*)&= \displaystyle\frac{(M+\varepsilon)c}{E_{\alpha}(-(\lambda^*-\varepsilon)\tau_3^\alpha)-d}E_{\alpha}(-(\lambda^*-\varepsilon)t_*^\alpha)+\gamma_2,\label{contra_4}\\
       v(t)&< \frac{(M+\varepsilon)c}{E_{\alpha}(-(\lambda^*-\varepsilon)\tau_3^\alpha)-d}E_{\alpha}(-(\lambda^*-\varepsilon)t^\alpha)+\gamma_2,\; \forall t\in[0,t_*)\label{contra_5}
		\end{align} and
		\begin{equation}\label{contra_6}
  u(t)\le (M+\varepsilon)E_{\alpha}(-(\lambda^*-\varepsilon)t^\alpha)+\gamma_1,\ \forall t\in [0,t_*].\end{equation}
	\end{itemize}
	
 Suppose the counterfactual $\textup{(i)}$ holds. Let $\gamma_1,\gamma_2\geq 0$ satisfies
 $-a\gamma_1+b\gamma_1+c\gamma_2+f=0$.
 Set $z(t)=u(t)-(M+\varepsilon)E_{\alpha}(-(\lambda^*-\varepsilon)t^\alpha)-\gamma_1,\ t\ge0.$
	Then, $z(t_*)=0 \text{ and } z(t)<0,\ \forall t\in [0,t_*).$
	By Lemma \ref{compare-FDE}, 
	\begin{equation}\label{contra1} 
	^{\!C}D^{\alpha}_{0^+}z(t_*)\geq 0.
 \end{equation} 
 
 On the other hand, 
	\begin{align*}
    ^{\!C}D^{\alpha}_{0^+}z(t_*)&=^{\!C}D^{\alpha}_{0^+}u(t_*)+(M+\varepsilon)(\lambda^*-\varepsilon)E_{\alpha}(-(\lambda^*-\varepsilon)t_*^\alpha) \\
		&\le -au(t_*)+b\sup_{0\le s_1\le\tau_1}u(t_*-s_1)+e\sup_{0\le s_2\le\tau_2}v(t_*-s_2)+f+(M+\varepsilon)(\lambda^*-\varepsilon)E_{\alpha}(-(\lambda^*-\varepsilon)t_*^\alpha)\\
		&=(M+\varepsilon)(\lambda^*-\varepsilon)E_{\alpha}(-(\lambda^*-\varepsilon)t_*^\alpha)-a(M+\varepsilon)E_{\alpha}(-(\lambda^*-\varepsilon)t_*^\alpha)-a\gamma_1+b\sup_{0\le s_1\le\tau_1}u(t_*-s_1)\\
  &+e\sup_{0\le s_2\le\tau_2}v(t_*-s_2)+f.
	\end{align*}
\textbf{Case I.1:} $t_*>\max\left\{\tau_1,\tau_2\right\}$. In this case, it is obvious to see that
\begin{equation*}
	\begin{cases}
		\displaystyle\sup_{0\le s_1\le\tau_1}u(t_*-s_1)&\leq (M+\varepsilon)E_{\alpha}(-(\lambda^*-\varepsilon)(t_*-\tau_1)^\alpha)+\gamma_1,\\
		\displaystyle\sup_{0\le s_2\le\tau_2}v(t_*-s_2)&\leq \displaystyle\frac{(M+\varepsilon)c}{E_{\alpha}(-(\lambda^*-\varepsilon)\tau_3^\alpha)-d}E_{\alpha}(-(\lambda^*-\varepsilon)(t_*-\tau_2)^\alpha)+\gamma_2.
	\end{cases}
\end{equation*}
 From this, using Lemma \ref{Est-ML}, it implies
\begin{align}
	^{\!C}D^{\alpha}_{0^+}z(t_*)&\le (M+\varepsilon)(\lambda^*-\varepsilon)E_{\alpha}(-(\lambda^*-\varepsilon)t_*^\alpha)-a(M+\varepsilon)E_{\alpha}(-(\lambda^*-\varepsilon)t_*^\alpha)-a\gamma_1\notag\\
    &\hspace{1cm}+b(M+\varepsilon) E_{\alpha}(-(\lambda^*-\varepsilon)(t_*-\tau_1)^\alpha)+b\gamma_1+c\gamma_2+f\notag\\
    &\hspace{1cm}+\frac{(M+\varepsilon)ec}{E_{\alpha}(-(\lambda^*-\varepsilon)\tau_3^\alpha)-d}E_{\alpha}(-(\lambda^*-\varepsilon)(t_*-\tau_2)^\alpha)\notag\\
	&\le (M+\varepsilon)\Big[(\lambda^*-\varepsilon)E_{\alpha}(-(\lambda^*-\varepsilon)t_*^\alpha)-aE_{\alpha}(-(\lambda^*-\varepsilon)t_*^\alpha)+\frac{bE_{\alpha}(-(\lambda^*-\varepsilon)t_*^\alpha)}{E_{\alpha}(-(\lambda^*-\varepsilon)\tau_1^\alpha)}\Big]\notag \\
	&\hspace{0.5cm}+\frac{(M+\varepsilon)ec}{E_{\alpha}(-(\lambda^*-\varepsilon)\tau_3^\alpha)-d}\times\frac{E_{\alpha}(-(\lambda^*-\varepsilon)t_*^\alpha)}{E_{\alpha}(-(\lambda^*-\varepsilon)\tau_2^\alpha)}-a\gamma_1+b\gamma_1+c\gamma_2+f.\notag
    \end{align}
    This means that
    \begin{align}
    \frac{^{\!C}D^{\alpha}_{0^+}z(t_*)}{(M+\varepsilon)E_{\alpha}(-(\lambda^*-\varepsilon)t_*^\alpha)}
	&=\lambda^*-\varepsilon-a+\frac{b}{E_{\alpha}(-(\lambda^*-\varepsilon)\tau_1^\alpha)}\notag\\
    &\hspace{-2cm}+\frac{ec}{E_{\alpha}(-(\lambda^*-\varepsilon)\tau_3^\alpha)-d}\times\frac{1}{E_{\alpha}(-(\lambda^*-\varepsilon)\tau_2^\alpha)}+\frac{-a\gamma_1+b\gamma_1+c\gamma_2+f}{(M+\varepsilon)E_{\alpha}(-(\lambda^*-\varepsilon)t_*^\alpha)}\notag\\
	&=h(\lambda^*-\varepsilon)\;\;\notag\\
 &<h(\lambda^*) =0.\notag
\end{align}
The last estimate above is due to the fact that $h(\cdot)$ is strictly increasing on $(0,\lambda_0)$, $h(\lambda^*)=0$ and thus it shows a contradiction to \eqref{contra1}.

\textbf{Case I.2:} $t_*<\min\left\{\tau_1,\tau_2\right\}$. In this case, we obtain
\begin{equation*}
	\begin{cases}
		\displaystyle\sup_{0\le s_1\le\tau_1}u(t_*-s_1)&\leq M+\varepsilon+\gamma_1,\\
		\displaystyle\sup_{0\le s_2\le\tau_2}v(t_*-s_2)&\leq \displaystyle\frac{(M+\varepsilon)c}{E_{\alpha}(-(\lambda^*-\varepsilon)\tau_3^\alpha)-d}+\gamma_2.
	\end{cases}
\end{equation*}
Thus, utilizing Lemma \ref{ML1}, it deduces
\begin{align}
	^{\!C}D^{\alpha}_{0^+}z(t_*)&\le (M+\varepsilon)(\lambda^*-\varepsilon)E_{\alpha}(-(\lambda^*-\varepsilon)t_*^\alpha)-a(M+\varepsilon)E_{\alpha}(-(\lambda^*-\varepsilon)t_*^\alpha)+b(M+\varepsilon)\notag\\
    &\hspace{1cm}+\frac{(M+\varepsilon)ec}{E_{\alpha}(-(\lambda^*-\varepsilon)\tau_3^\alpha)-d}-a\gamma_1+b\gamma_1+c\gamma_2+f.\notag
    \end{align}
    From this, we have
    \begin{align}
    \frac{^{\!C}D^{\alpha}_{0^+}z(t_*)}{(M+\varepsilon)E_{\alpha}(-(\lambda^*-\varepsilon)t_*^\alpha)}&=\lambda^*-\varepsilon-a+\frac{b}{E_{\alpha}(-(\lambda^*-\varepsilon)t_*^\alpha)}
   +\frac{ec}{E_{\alpha}(-(\lambda^*-\varepsilon)\tau_3^\alpha)-d}\times\frac{1}{E_{\alpha}(-(\lambda^*-\varepsilon)t_*^\alpha)}\notag\\
	&\le \lambda^*-\varepsilon-a+\frac{b}{E_{\alpha}(-(\lambda^*-\varepsilon)\tau_1^\alpha)}+\frac{ec}{E_{\alpha}(-(\lambda^*-\varepsilon)\tau_3^\alpha)-d}\times\frac{1}{E_{\alpha}(-(\lambda^*-\varepsilon)\tau_2^\alpha)}\notag\\
	&=h(\lambda^*-\varepsilon)\notag\\
 &<h(\lambda^*)=0, \notag
\end{align}
a contradiction to \eqref{contra1}.

\textbf{Case I.3:} $\min\left\{\tau_1,\tau_2\right\}\le t_*\le\max\left\{\tau_1,\tau_2\right\}$. There are the following two possibilities. 

\textbf{Case I.3.1:} $\min\left\{\tau_1,\tau_2\right\}\le t_*\le\max\left\{\tau_1,\tau_2\right\}$, $\tau_1\le\tau_2$. Then, 	
	\begin{equation*}
		\begin{cases}
			\displaystyle\sup_{0\le s_1\le\tau_1}u(t_*-s_1)&\leq (M+\varepsilon)E_{\alpha}(-(\lambda^*-\varepsilon)(t_*-\tau_1)^\alpha)+\gamma_1,\\
			\displaystyle\sup_{0\le s_2\le\tau_2}v(t_*-s_2)&\leq \displaystyle\frac{(M+\varepsilon)c}{E_{\alpha}(-(\lambda^*-\varepsilon)\tau_3^\alpha)-d}+\gamma_2.
		\end{cases}
	\end{equation*}
By combining Lemma \ref{ML1} and Lemma \ref{Est-ML}, it shows that
\begin{align}
	\frac{^{\!C}D^{\alpha}_{0^+}z(t_*)}{(M+\varepsilon)E_{\alpha}(-(\lambda^*-\varepsilon)t_*^\alpha)}&\le \lambda^*-\varepsilon-a+\frac{bE_{\alpha}(-(\lambda^*-\varepsilon)(t_*-\tau_1)^\alpha)}{E_{\alpha}(-(\lambda^*-\varepsilon)t_*^\alpha)}\notag\\
 &\hspace{-1cm}+\frac{ec}{E_{\alpha}(-(\lambda^*-\varepsilon)\tau_3^\alpha)-d}\times\frac{1}{E_{\alpha}(-(\lambda^*-\varepsilon)t_*^\alpha)}+\frac{-a\gamma_1+b\gamma_1+c\gamma_2+f}{(M+\varepsilon)E_{\alpha}(-(\lambda^*-\varepsilon)t_*^\alpha)}\notag\\
	&\le \lambda^*-\varepsilon-a+\frac{b}{E_{\alpha}(-(\lambda^*-\varepsilon)\tau_1^\alpha)}+\frac{ec}{E_{\alpha}(-(\lambda^*-\varepsilon)\tau_3^\alpha)-d}\times\frac{1}{E_{\alpha}(-(\lambda^*-\varepsilon)\tau_2^\alpha)}\notag\\
 &=h(\lambda^*-\varepsilon)\notag\\
 &<h(\lambda^*)=0,\notag
\end{align}
a contradiction.

\textbf{Case I.3.2:} $\min\left\{\tau_1,\tau_2\right\}\le t_*\le\max\left\{\tau_1,\tau_2\right\}$, $\tau_2<\tau_1$. We have 	
\begin{equation*}
	\begin{cases}
		\displaystyle\sup_{0\le s_1\le\tau_1}u(t_*-s_1)&\leq M+\varepsilon+\gamma_1,\\
		\displaystyle\sup_{0\le s_2\le\tau_2}v(t_*-s_2)&\leq \displaystyle\frac{(M+\varepsilon)c}{E_{\alpha}(-\lambda^*\tau_3^\alpha)-d}E_{\alpha}(-\lambda^*(t_*-\tau_2)^\alpha)+\gamma_2.
	\end{cases}
\end{equation*}
With the help of Lemma \ref{ML1} and Lemma \ref{Est-ML}, then 
\begin{align*}
	^{\!C}D^{\alpha}_{0^+}z(t_*)&\le (M+\varepsilon)\left(\lambda^*-\varepsilon-a\right)E_{\alpha}(-(\lambda^*-\varepsilon)t_*^\alpha)+b(M+\varepsilon)\\
 &\hspace{1cm}+\frac{(M+\varepsilon)ec}{E_{\alpha}(-(\lambda^*-\varepsilon)\tau_3^\alpha)-d}E_{\alpha}(-(\lambda^*-\varepsilon)(t_*-\tau_2)^\alpha)-a\gamma_1+b\gamma_1+c\gamma_2+f.
 \end{align*}
 Hence,
 \begin{align}
 \frac{^{\!C}D^{\alpha}_{0^+}z(t_*)}{(M+\varepsilon)E_{\alpha}(-(\lambda^*-\varepsilon)t_*^\alpha)}
	&\le \lambda^*-\varepsilon-a+\frac{b}{E_{\alpha}(-(\lambda^*-\varepsilon)t_*^\alpha)}\notag\\
 &\hspace{-2cm}+\frac{ec}{E_{\alpha}(-(\lambda^*-\varepsilon)\tau_3^\alpha)-d}\times\frac{E_{\alpha}(-(\lambda^*-\varepsilon)(t_*-\tau_2)^\alpha)}{E_{\alpha}(-(\lambda^*-\varepsilon)t_*^\alpha)}+\frac{-a\gamma_1+b\gamma_1+c\gamma_2+f}{(M+\varepsilon)E_{\alpha}(-(\lambda^*-\varepsilon)t_*^\alpha)}\notag\\
 &\le \lambda^*-\varepsilon-a+\frac{b}{E_{\alpha}(-(\lambda^*-\varepsilon)\tau_1^\alpha)}+\frac{ec}{E_{\alpha}(-(\lambda^*-\varepsilon)\tau_3^\alpha)-d}\times\frac{1}{E_{\alpha}(-(\lambda^*-\varepsilon)\tau_2^\alpha)}\notag\\
	&=h(\lambda^*-\varepsilon)\notag\\
 &<h(\lambda^*)=0,\notag
\end{align}
a contradiction.

We now assume that the counterfactual $\textup{(ii)}$ is true. Let $\gamma_1,\gamma_2$ satisfies
$ -\gamma_2+c\gamma_1+d\gamma_2=0$.
From \eqref{E0_2}, we see
\begin{align}\label{est-v}
	\frac{(M+\varepsilon)c}{E_{\alpha}(-(\lambda^*-\varepsilon)\tau_3^\alpha)-d}E_{\alpha}(-(\lambda^*-\varepsilon)t_*^\alpha)+\gamma_2&\le (M+\varepsilon)cE_{\alpha}(-(\lambda^*-\varepsilon)t_*^\alpha)+c\gamma_1+d\sup_{0\le s_3\le \tau_3}v(t_*-s_3).
\end{align}

\textbf{Case II.1:} $t_*>\tau_3$. By \eqref{contra_5},   $\displaystyle\sup_{0\le s_3\le \tau_3}v(t_*-s_3)\leq\frac{(M+\varepsilon)c}{E_{\alpha}(-(\lambda^*-\varepsilon)\tau_3^\alpha)-d}E_{\alpha}(-(\lambda^*-\varepsilon)(t_*-\tau_3)^\alpha)+\gamma_2$, which together with \eqref{est-v} implies that
\begin{align*}\frac{E_{\alpha}(-(\lambda^*-\varepsilon)t_*^\alpha)}{E_{\alpha}(-(\lambda^*-\varepsilon)\tau_3^\alpha)-d}&\le E_{\alpha}(-(\lambda^*-\varepsilon)t_*^\alpha)+\frac{d}{E_{\alpha}(-(\lambda^*-\varepsilon)\tau_3^\alpha)-d}E_{\alpha}(-(\lambda^*-\varepsilon)(t_*-\tau_3)^\alpha)
+\frac{-\gamma_2+c\gamma_1+d\gamma_2}{(M+\varepsilon)c}.\end{align*}
Thus,
\begin{align}
\frac{1}{E_{\alpha}(-(\lambda^*-\varepsilon)\tau_3^\alpha)-d}&\le 1+\frac{d}{E_{\alpha}(-(\lambda^*-\varepsilon)\tau_3^\alpha)-d}\times\frac{E_{\alpha}(-(\lambda^*-\varepsilon)(t_*-\tau_3)^\alpha)}{E_{\alpha}(-(\lambda^*-\varepsilon)t_*^\alpha)}\notag\\
&\le 1+\frac{d}{E_{\alpha}(-(\lambda^*-\varepsilon)\tau_3^\alpha)-d}\times\frac{1}{E_{\alpha}(-(\lambda^*-\varepsilon)\tau_3^\alpha)}\;\;\left(\text{by Lemma \ref{Est-ML}}\right)\notag\\
&=\frac{\left[E_{\alpha}(-(\lambda^*-\varepsilon)\tau_3^\alpha)\right]^2-dE_{\alpha}(-(\lambda^*-\varepsilon)\tau_3^\alpha)+d}{E_{\alpha}(-(\lambda^*-\varepsilon)\tau_3^\alpha)-d}\notag\\
&=\frac{\left[E_{\alpha}(-(\lambda^*-\varepsilon)\tau_3^\alpha)-1\right]\left[E_{\alpha}(-(\lambda^*-\varepsilon)\tau_3^\alpha)-d\right]+E_{\alpha}(-(\lambda^*-\varepsilon)\tau_3^\alpha)}{E_{\alpha}(-(\lambda^*-\varepsilon)\tau_3^\alpha)-d}\notag\\
&<\frac{1}{E_{\alpha}(-(\lambda^*-\varepsilon)\tau_3^\alpha)-d},\notag
\end{align}
a contradiction.

\textbf{Case II.2:} $t_*\le\tau_3$. From \eqref{icd_2}, \eqref{contra_4}, \eqref{contra_5}, it leads to $\displaystyle\sup_{0\le s_3\le \tau_3}v(t_*-s_3)\leq \frac{(M+\varepsilon)c}{E_{\alpha}(-\lambda^*\tau_3^\alpha)-d}+\gamma_2$, which combines with the estimate \eqref{est-v} shows that
\begin{align*}\frac{(M+\varepsilon)c}{E_{\alpha}(-(\lambda^*-\varepsilon)\tau_3^\alpha)-d}E_{\alpha}(-(\lambda^*-\varepsilon)t_*^\alpha)+\gamma_2&\le (M+\varepsilon)cE_{\alpha}(-(\lambda^*-\varepsilon)t_*^\alpha)+c\gamma_1
+d\frac{(M+\varepsilon)c}{E_{\alpha}(-(\lambda^*-\varepsilon)\tau_3^\alpha)-d}+d\gamma_2.
\end{align*}
Hence, 
\begin{align}
	\frac{1}{E_{\alpha}(-\lambda^*\tau_3^\alpha)-d}&\le 1+\frac{d}{E_{\alpha}(-\lambda^*\tau_3^\alpha)-d}\times\frac{1}{E_{\alpha}(-\lambda^*t_*^\alpha)}+\frac{-\gamma_2+c\gamma_1+d\gamma_2}{(M+\varepsilon)cE_{\alpha}(-(\lambda^*-\varepsilon)t_*^\alpha)}\notag\\
	&\le 1+\frac{d}{E_{\alpha}(-\lambda^*\tau_3^\alpha)-d}\times\frac{1}{E_{\alpha}(-\lambda^*\tau_3^\alpha)}\;\; \left(\text{by Lemma \ref{ML1}}\right)\notag\\
	&<\frac{1}{E_{\alpha}(-\lambda^*\tau_3^\alpha)-d}, \;\;\left(\text{as discussed above}\right)\notag
\end{align}
a contradiction.

In short, based on all the observations obtained above, by choosing
 \begin{align}
  \gamma_1= \frac{(1-d)f}{(1-d)(a-b)-ce}, \quad
  \gamma_2=\frac{cf}{(1-d)(a-b)-ce},\label{coef2}
 \end{align}
we conclude that the estimates \eqref{est_hala_add_1}--\eqref{est_hala_add_2} are true for all $t\geq 0$. Notice that $\gamma_1,\gamma_2$ chosen as \eqref{coef2} is the unique solution of the system. The desired estimates \eqref{est_hala_1}--\eqref{est_hala_2} by letting $\varepsilon\to 0$. The proof is complete.
\end{proof}

\begin{remark}
Some special forms of Theorem \ref{Theorem-Hala} have been published in the literature. Indeed, considering system \eqref{E0_1}--\eqref{E0_2}, the case when $e=d=0$ has been discussed in \cite[Lemma 2.3]{Wang_15} and \cite[Lemma 4]{Wang_19}, the case $b=0$ has been analyzed in \cite[Lemma 4]{Wang}.    
\end{remark}

\begin{remark}
Theorem \ref{Theorem-Hala} presents a generalized and strengthened version of \cite[Lemma 4]{Wang}. 
Compared with previous work, in this paper, we construct a new parameter equation $h(\lambda)$ with respect to $\lambda$ in \eqref{eq:hfun} to more clearly and concisely prove the strict negativity of parameter $\lambda^*$, which ensures the Mittag-Leffler stability of the time fractional order model, i.e. the optimal long-term algebraic decay rate.
\end{remark}

\section{Mittag-Leffler stability of fractional-order linear coupled systems with delays}

This part analyzes the asymptotic behavior of solutions to fractional-order linear coupled systems with delays, which may not necessarily be positive. Our approach combines the Halanay-type fractional inequality  with the positive representation method developed by Iuliis et al. \cite{Luliis}. 
The system under consideration is as follows:
\begin{align} \label{Eq}
		\begin{cases}
		^{\!C}D^{{\alpha}}_{0+}x(t)&=Ax(t)+Bx(t-\tau_1)+Ey(t-\tau_2),\; t\in (0,\infty),\\
			y(t)&= Cx(t)+Dy(t-\tau_3),\; t\in [0,\infty),\\
x(\cdot)&=\varphi(\cdot),\;y(\cdot)=\psi(\cdot)\;\text{on}\; [-\tau,0],
       		\end{cases}	
	\end{align}
where $A,\ B\in \R^{d\times d},\ E\in \R^{d\times n},\ C\in \R^{n\times d},\ D\in \R^{n\times n}$, 
$\varphi [-r,0]\rightarrow \R^d,\ \psi [-r,0]\rightarrow \R^n$ are given continuous functions satisfying the compatibility condition 
\begin{align} \label{condK}
\textup{(K):}\;\; C\varphi(0)+D\psi(-\tau_3)=\psi(0),
\end{align}
where $\tau_1,\tau_2,\tau_3$ are positive delays and $\tau=\max\left\{\tau_1,\tau_2,\tau_3\right\}$. 

\begin{lemma}\label{exist_sol}
 The system \eqref{Eq} has a unique solution  $\Phi(\cdot;\varphi,\psi)$ on $[-r,\infty)$, where $$\Phi(t;\varphi,\psi)=\left(\begin{array}{cc}
		x(t;\varphi,\psi) \\ y(t;\varphi,\psi)\end{array}\right) =\left(\begin{array}{cc}
		x_1(t;\varphi,\psi) \\ \vdots \\ x_d(t;\varphi,\psi) \\y_1(t;\varphi,\psi) \\ \vdots \\ y_n(t;\varphi,\psi)\end{array}\right).$$ 
\end{lemma}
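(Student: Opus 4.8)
The plan is to reduce the coupled system \eqref{Eq} to a single Caputo fractional delay differential equation for $x$ by first solving the algebraic-delay relation for $y$, and then to invoke the standard existence-uniqueness theory for fractional delay equations on successive intervals (the method of steps). First I would observe that the second line of \eqref{Eq}, $y(t) = Cx(t) + Dy(t-\tau_3)$, determines $y$ on $[0,\infty)$ recursively: on $[0,\tau_3]$ the term $y(t-\tau_3)$ lies in $[-\tau_3,0]$ and equals the prescribed data $\psi(t-\tau_3)$, so $y(t) = Cx(t) + D\psi(t-\tau_3)$ there; having $y$ on $[0,\tau_3]$ one repeats to get $y$ on $[\tau_3,2\tau_3]$, and so on. Thus, once $x$ is known on $[0,\infty)$, $y$ is uniquely and explicitly recovered, and it is continuous on $[0,\infty)$ precisely because of the compatibility condition \textup{(K)} in \eqref{condK}, which matches the two one-sided limits at $t=0$.

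Next I would carry out the method of steps for $x$. On the first interval $[0,\tau]$ (with $\tau = \max\{\tau_1,\tau_2,\tau_3\}$), both $x(t-\tau_1)$ and $y(t-\tau_2)$ are expressed through the known data $\varphi,\psi$ (for the $y$ term one uses the recursive formula for $y$ restricted to $[0,\tau]$, which only involves $\psi$ on $[-\tau_3,0]$ and $x$ on $[0,\tau]$ — but on this first interval only the delayed copy $y(t-\tau_2)$ with argument in $[-\tau_2,\tau-\tau_2]$ appears, and when the argument is nonnegative we substitute $y(s) = Cx(s) + D\psi(s-\tau_3)$). This turns the first line of \eqref{Eq} into
\begin{align*}
{}^{\!C}D^{\alpha}_{0^+}x(t) = Ax(t) + E C\,x(t-\tau_2)\mathds{1}_{\{t\ge\tau_2\}} + g(t), \qquad t\in(0,\tau],
\end{align*}
with $g$ a continuous, explicitly known function on $[0,\tau]$ built from $A,B,E,D$ and the data. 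Equivalently, via the Riemann--Liouville integral, $x(t) = \varphi(0) + I^{\alpha}_{0^+}\big(Ax + ECx(\cdot-\tau_2)\mathds{1} + g\big)(t)$. A Banach fixed-point argument on $C([0,\tau];\R^d)$ — or, to avoid the smallness-of-interval issue, on $C([0,\tau];\R^d)$ equipped with a weighted (Bielecki-type, or Mittag-Leffler weighted) norm — gives a unique continuous solution $x$ on $[0,\tau]$; the delay term causes no difficulty since its argument is shifted back. Then one advances to $[\tau,2\tau]$, where $x(t-\tau_1)$, $x(t-\tau_2)$ and the reconstructed $y(t-\tau_2)$ are now known from the previous step, and repeats. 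Iterating over $[k\tau,(k+1)\tau]$, $k\in\N$, yields a unique $x\in C([0,\infty);\R^d)$, hence a unique $\Phi = (x^{\mathrm T}, y^{\mathrm T})^{\mathrm T}$ on $[-\tau,\infty)$.

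The main obstacle I anticipate is bookkeeping rather than anything deep: one must be careful that the reconstructed $y$ fed into the $x$-equation is itself continuous and well-defined on each step — this is exactly where \textup{(K)} is used, and where one checks that the recursion for $y$ does not re-introduce future values of $x$ (it does not, since $y(t)$ depends only on $x(t)$ and past values of $y$). A secondary technical point is justifying the fixed-point contraction: because the kernel $(t-s)^{\alpha-1}/\Gamma(\alpha)$ is only weakly singular, a plain sup-norm contraction needs $\tau$ small, so I would instead use the weighted-norm trick (estimating $I^{\alpha}_{0^+}$ against $E_{\alpha}(L t^{\alpha})$) to get a global contraction on each fixed interval regardless of its length, or alternatively cite the standard existence result for Caputo fractional functional differential equations with bounded delay. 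Either route closes the argument; no new estimate beyond the elementary properties of $I^{\alpha}_{0^+}$ and $E_\alpha$ recorded in Lemma \ref{ML1} is required.
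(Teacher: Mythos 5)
Your argument is essentially correct, but it is worth noting that the paper does not prove this lemma at all: its entire ``proof'' is a citation to Theorem~3.2 and Remark~3.4 of Tuan--Thinh \cite{Tuan-Thinh}, which establish well-posedness for exactly this class of mixed-order coupled delay systems. What you have written is, in effect, the direct argument that such a citation stands in for: unroll the algebraic-delay relation $y(t)=Cx(t)+Dy(t-\tau_3)$ backwards until it bottoms out in $\psi$ (using (K) only to guarantee continuity of $y$ at $t=0$), substitute into the $x$-equation to obtain a Caputo fractional \emph{delay} equation in $x$ alone, and run the method of steps with a Bielecki/Mittag-Leffler weighted-norm contraction on each interval. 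This buys the reader a self-contained proof at the cost of a page of bookkeeping, and the bookkeeping is where your write-up needs one repair: the substitution $y(s)=Cx(s)+D\psi(s-\tau_3)$ is valid only for $s\in[0,\tau_3]$, whereas on your first interval $[0,\tau]$ with $\tau=\max\{\tau_1,\tau_2,\tau_3\}$ the argument $s=t-\tau_2$ can exceed $\tau_3$ (e.g.\ $\tau_1$ large, $\tau_2,\tau_3$ small), so the recursion must be iterated, producing a sum of terms $ED^kCx(t-\tau_2-k\tau_3)$ plus a $\psi$-dependent forcing, rather than the single term $ECx(t-\tau_2)$ in your displayed equation. This does not break the argument --- every $x$-argument appearing is still $\le t$, so the operator remains Volterra and the weighted-norm contraction goes through unchanged --- and it disappears entirely if you instead take the step length to be $\min\{\tau_1,\tau_2,\tau_3\}$, on which all delayed terms are pure data from the previous step. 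With that correction your proposal is a valid (and more informative) substitute for the paper's citation.
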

\begin{proof}
The proof is based on \cite[Theorem 3.2]{Tuan-Thinh} and \cite[Remark 3.4]{Tuan-Thinh}.
\end{proof}

\begin{proposition} \cite[Theorem 4.2]{Tuan-Thinh}\label{Pro-positive}
Suppose that $A$ is Metzler, $B,E,C,D$ are nonnegative.
Then, the system \eqref{Eq} is positive.
\end{proposition}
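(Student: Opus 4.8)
The plan is to prove Proposition \ref{Pro-positive} by establishing that the two components of the solution $\Phi(t;\varphi,\psi)$ remain in the nonnegative orthant for all $t\ge 0$ whenever the initial data $\varphi,\psi$ are nonnegative, using the algebraic structure of the coefficient matrices. The first step is to recall the precise definition of positivity for system \eqref{Eq}: for every pair of nonnegative continuous initial functions $\varphi:[-\tau,0]\to\R^d_{\ge0}$, $\psi:[-\tau,0]\to\R^n_{\ge0}$ satisfying the compatibility condition (K), the solution obeys $x(t;\varphi,\psi)\in\R^d_{\ge0}$ and $y(t;\varphi,\psi)\in\R^n_{\ge0}$ for all $t\ge0$. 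Since existence and uniqueness of $\Phi$ are already guaranteed by Lemma \ref{exist_sol}, it suffices to verify that the solution cannot leave the orthant.

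The core argument I would carry out is a first-exit-time / contradiction argument driven by Lemma \ref{compare-FDE}. Suppose the conclusion fails; then there is a first time $t_*>0$ at which some coordinate of $x$ or of $y$ becomes zero after having been nonnegative (with the relevant coordinate strictly controlled) on $[0,t_*)$. The algebraic setup that makes this work is: $A$ Metzler means $A$ has nonnegative off-diagonal entries, so for the coordinate $x_i$ that vanishes first we can write $^{\!C}D^{\alpha}_{0^+}x_i(t_*)=\sum_{j} A_{ij}x_j(t_*)+\sum_j B_{ij}x_j(t_*-\tau_1)+\sum_j E_{ij}y_j(t_*-\tau_2)$; because $A_{ii}x_i(t_*)=0$, all remaining terms involve $A_{ij}$ ($j\neq i$), $B_{ij}$, $E_{ij}$ — all nonnegative — multiplied by quantities that are nonnegative by the minimality of $t_*$. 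Hence $^{\!C}D^{\alpha}_{0^+}x_i(t_*)\ge0$, which is consistent rather than contradictory, so one must argue more carefully: the standard device is to perturb, i.e. compare $x$ with the solution $x^{\varepsilon}$ of the system with an added strictly positive forcing $\varepsilon\mathbf{1}$, show $x^{\varepsilon}$ stays strictly positive by the comparison lemma (any first zero coordinate would force $^{\!C}D^{\alpha}_{0^+}x_i^{\varepsilon}(t_*)\ge\varepsilon>0$ while the comparison lemma, applied to $x_i^\varepsilon$ which is negative just before $t_*$, gives $^{\!C}D^{\alpha}_{0^+}x_i^\varepsilon(t_*)\ge0$ — wait, this is not yet a contradiction either, so one instead shows $x_i^\varepsilon$ cannot touch zero at all), then pass $\varepsilon\to0$ using continuous dependence from \cite{Tuan-Thinh}. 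The algebraic relation $y(t)=Cx(t)+Dy(t-\tau_3)$ with $C,D\ge0$ then propagates nonnegativity of $x$ to $y$ directly, and the compatibility condition (K) ensures this holds at $t=0$ as well.

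A cleaner route, and the one I would actually write, is to invoke the variation-of-constants / Picard iteration representation of the solution from \cite[Theorem 3.2, Remark 3.4]{Tuan-Thinh}: on the first interval $[0,\tau]$ (with $\tau$ replaced by $\min\{\tau_1,\tau_2,\tau_3\}$ if the delays differ, then proceeding interval by interval), the solution of $^{\!C}D^{\alpha}_{0^+}x=Ax+g(t)$ with $x(0)=\varphi(0)\ge0$ is $x(t)=E_\alpha(A t^\alpha)\varphi(0)+\int_0^t (t-s)^{\alpha-1}E_{\alpha,\alpha}(A(t-s)^\alpha)g(s)\,ds$, where $g(t)=Bx(t-\tau_1)+Ey(t-\tau_2)$ depends only on past (hence known, nonnegative) data on that first subinterval. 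Since $A$ is Metzler, $E_\alpha(At^\alpha)$ and $E_{\alpha,\alpha}(At^\alpha)$ are entrywise nonnegative matrices for $t\ge0$ (a standard fact: $At^\alpha+\mu t^\alpha I$ is nonnegative for $\mu$ large, and the Mittag-Leffler function of a nonnegative matrix is nonnegative; the shift is absorbed by a scalar positive factor), so $x(t)\ge0$ on $[0,\tau]$, then $y(t)=Cx(t)+Dy(t-\tau_3)\ge0$ there too, and one bootstraps over successive intervals of length $\tau$ by induction. The main obstacle is the matrix-valued nonnegativity of $E_\alpha(At^\alpha)$ and $E_{\alpha,\alpha}(At^\alpha)$ for Metzler $A$: this requires justifying that $\exp$ of a Metzler matrix is nonnegative, extending this to the Mittag-Leffler series via its subordination/integral representation in terms of the exponential, and handling the diagonal shift carefully so that the scalar correction factor (which is positive) does not affect the sign. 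I would state this matrix nonnegativity as the key lemma, cite or prove it via the identity $E_{\alpha}(At^\alpha)=\int_0^\infty M_\alpha(\theta)\exp(At^\alpha\theta)\,d\theta$ with $M_\alpha\ge0$ the M-Wright function (or equivalently note $A$ Metzler $\Rightarrow$ $e^{As}\ge0$ for $s\ge0$ $\Rightarrow$ $E_\alpha(At^\alpha)\ge0$), and then the rest of the proof is the routine interval-by-interval induction sketched above.
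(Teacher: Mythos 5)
The paper does not prove this proposition at all --- it is imported verbatim as \cite[Theorem 4.2]{Tuan-Thinh} --- so there is no in-paper argument to compare against; I can only assess your proposal on its own merits. Your second route (stepping over intervals of length $\min\{\tau_1,\tau_2,\tau_3\}$, using the variation-of-constants formula and the entrywise nonnegativity of $E_\alpha(At^\alpha)$ and $E_{\alpha,\alpha}(At^\alpha)$ for Metzler $A$) is the standard and correct way to prove this, and your identification of the matrix nonnegativity as the key lemma, justified via the subordination formula $E_\alpha(At^\alpha)=\int_0^\infty M_\alpha(\theta)\exp(At^\alpha\theta)\,d\theta$ with the Wright kernel $M_\alpha\ge 0$, is sound: $e^{As}\ge 0$ entrywise for Metzler $A$ and $s\ge 0$, the kernel is nonnegative, and $E_{\alpha,\alpha}=\alpha E_\alpha'$ inherits nonnegativity by differentiating under the integral. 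The induction then closes because the delayed forcing $Bx(\cdot-\tau_1)+Ey(\cdot-\tau_2)$ only involves values already known to be nonnegative, and the algebraic equation $y=Cx+Dy(\cdot-\tau_3)$ propagates nonnegativity forward in steps of $\tau_3$.

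Two concrete corrections. First, your parenthetical fallback --- shifting $A$ to $A+\mu I\ge 0$ and ``absorbing the shift by a scalar positive factor'' --- does not work for Mittag-Leffler functions: unlike the exponential, $E_\alpha$ has no semigroup property, so $E_\alpha((A+\mu I)t^\alpha)$ does not factor as $E_\alpha(\mu t^\alpha)E_\alpha(At^\alpha)$ or anything similar. The subordination identity is not an optional refinement of the shift trick; it is the only one of the two that works, so the shift remark should be deleted. Second, in your first route the contradiction mechanism is garbled: at the first time $t_*$ a coordinate $x_i^\varepsilon$ of the $\varepsilon$-perturbed system vanishes, you should apply Lemma \ref{compare-FDE} to $-x_i^\varepsilon$ (which is negative on $[0,t_*)$ and zero at $t_*$), obtaining $^{\!C}D^\alpha_{0^+}x_i^\varepsilon(t_*)\le 0$; the equation, together with the Metzler/nonnegativity structure and the minimality of $t_*$, gives $^{\!C}D^\alpha_{0^+}x_i^\varepsilon(t_*)\ge\varepsilon>0$. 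That \emph{is} the contradiction --- your ``wait, this is not yet a contradiction either'' is wrong, and the subsequent hedge is unnecessary. You would still need to handle the case $\varphi_i(0)=0$ (perturb the initial data as well) and justify continuous dependence to pass $\varepsilon\to 0$, which is why the stepping route is the cleaner one to actually write.
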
 

We now recall some basic concepts from \cite{Luliis}. For a matrix (or vector) $M=(m_{ij})\in\R^{k\times l}$, the symbols $M^+=(m^+_{ij}),\ M^-=(m^-_{ij})$ denote the
component-wise positive and negative parts of $M$, i.e.,
\[m^+_{ij}=\begin{cases}
    m_{ij}\ &\text{if}\ m_{ij}\ge0,\\
    0\ &\text{if}\ m_{ij}<0,
\end{cases} \ \text{and} \ m^-_{ij}=\begin{cases}
    -m_{ij}\ &\text{if}\ m_{ij}\le0,\\
    0\ &\text{if}\ m_{ij}>0.
\end{cases}\]
 Moreover, $|M|$ denotes the component-wise absolute value of $M$. We easily observe that $M=M^+-M^-\ \text{and}\ |M|=M^++M^-.$
 
 \begin{definition} \cite[Definition 2]{Luliis}
     A positive representation of a vector $w\in\R^k$ is any vector $\Tilde{w}\in\R_{\geq 0}^{2k}$ such that \[w=\Delta_k \Tilde{w},\]
     here $\Delta_k=(\delta_{ij})_{k\times{2k}}$ with $\delta_{ij}=\begin{cases}
         1\ &\text{if}\ i=j,\\
         -1\ &\text{if}\ j=i+k,\\
         0\ &\text{if} \ i\ne j\ \text{and}\ j\ne i+k.
     \end{cases}$
     The min-positive representation of a vector $w\in\R^k$ is the nonnegative vector $\pi(w)\in\R_{\geq 0}^{2k}$ as below:
     $\pi(w)=\begin{pmatrix}
         w^+\\ w^-
     \end{pmatrix}.$
     The min-positive representation of a matrix $M\in\R^{k\times l}$ is the nonnegative matrix $\Pi(M)\in\R_{\geq 0}^{2k\times2l}$ given as
     $\Pi(M)=\begin{pmatrix}
         M^+& M^-\\ M^- & M^+
     \end{pmatrix}.$
The min-Metzler representation of a matrix $A\in\R^{k\times k}$ is the
Metzler matrix $\Gamma(A)\in\R_{\geq 0}^{2k\times 2k}$ defined by
\[\Gamma(A)=\begin{pmatrix}
         \text{diag}(A)+(A-\text{diag}(A))^+& (A-\text{diag}(A))^-\\ (A-\text{diag}(A))^- & \text{diag}(A)+(A-\text{diag}(A))^+
     \end{pmatrix}.\]
 \end{definition}
 \begin{remark}
     For any $x\in\R^k$ and matrices $M\in\R^{k\times l},\ A\in\R^{k\times k}$ the following properties hold true:
     \begin{itemize}
         \item [a)] $x=\Delta_k\pi(x),$
         \item [b)] $\Delta_k\Pi(M)=M\Delta_k$, so that $\Delta_k\Pi(M)\pi(x)=Mx,$ 
         \item [c)] $\Delta_k\Gamma(A)=A\Delta_k$, so that $\Delta_k\Gamma(A)\pi(x)=Ax.$ 
     \end{itemize}
 \end{remark}
 
 Based on \cite[Definition 3]{Luliis}, we propose the following definition for fractional-order differential systems.
 
 \begin{definition}
     A positive representation (PR) of a fractional delay differential system as in \eqref{Eq} is a positive system in the form:
     \begin{equation*}
	\begin{cases}
		^{\!C}D^{\alpha}_{0^+}\tilde{x}(t)&=\tilde{A}\tilde{x}(t)+\tilde{B}\tilde{x}(t-\tau_1)+\tilde{E}\tilde{y}(t-\tau_2),\ \forall t\ge0,\\
		\tilde{y}(t)&= \tilde{C}\tilde{x}(t)+\tilde{D}\tilde{y}(t-\tau_3),\; \forall t>0,\\
\tilde{x}(\cdot)&=\tilde{\varphi}(\cdot),\;\tilde{y}(\cdot)=\tilde{\psi}(\cdot)\;\text{on}\; [-\tau,0],
	\end{cases}
\end{equation*}
 accompanied by four continuous transformations $\{T_X^f,\ T_X^b,\ T_Y^f,\ T_Y^b\}$,
\begin{align*}
    T_X^f:\R^d\rightarrow \R_{\geq 0}^{2d},\ T_X^b:\R_{\geq 0}^{2d}\rightarrow\R^d,
    T_Y^f:\R^n\rightarrow \R_{\geq 0}^{2n},\ T_Y^b:\R_{\geq 0}^{2n}\rightarrow\R^n,
\end{align*}
such that for all  $\varphi \in C([-r,0];\R^d)$, $\psi\in C([-r,0];\R^n)$ satisfying the compatibility condition (K), the following implication holds:
\begin{align*}
    \begin{cases}
        \tilde{\varphi}(s)&=T_X^f(\varphi(s))\\
        \tilde{\psi}(s)&=T_Y^f(\psi(s))
    \end{cases} ,\ \forall s\in[-r,0] \Longrightarrow \begin{cases}
        x(t)&=T_X^b(\tilde{x}(t))\\
         y(t)&=T_Y^b(\tilde{y}(t))
    \end{cases}
   ,\ \forall t\geq 0.
\end{align*}
 \end{definition}
 
 \begin{theorem} \label{Theorem-IPR}
     Consider a delay differential system \eqref{Eq} with the compatibility condition (K). A positive system  
      \begin{equation} \label{Equation-main-IPR}
	\begin{cases}
		^{\!C}D^{\alpha}_{0^+}\tilde{x}(t)&=\tilde{A}\tilde{x}(t)+\tilde{B}\tilde{x}(t-\tau_1)+\tilde{E}\tilde{y}(t-\tau_2),\ \forall t>0,\\
  \tilde{y}(t)&= \tilde{C}\tilde{x}(t)+\tilde{D}\tilde{y}(t-\tau_3),\; \forall t\ge0,\\
\tilde{x}(\cdot)&=\tilde{\varphi}(\cdot),\;\tilde{y}(\cdot)=\tilde{\psi}(\cdot)\;\text{on}\; [-\tau,0],
	\end{cases}
\end{equation}
with $\tilde{A}=\Gamma(A),\ \tilde{B}=\Pi(B),\ \tilde{E}=\Pi(E),\ \tilde{C}=\Pi(C),\ \tilde{D}=\Pi(D)$, accompanied by the four continuous transformations, defined as:
\begin{align*}
    \tilde{x}=T_X^f(x)=\pi(x),\ x=T_X^b(\tilde{x})=\Delta_d\tilde{x},
    \tilde{y}=T_Y^f(y)=\pi(y),\ y=T_Y^b(\tilde{y})=\Delta_n\tilde{y},
\end{align*}
which is a PR of \eqref{Eq}.
 \end{theorem}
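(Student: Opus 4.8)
The plan is to verify the two defining requirements of a positive representation: that the auxiliary system \eqref{Equation-main-IPR} is a positive system, and that pushing the data of \eqref{Eq} forward through $T_X^f,T_Y^f$, solving \eqref{Equation-main-IPR}, and pulling back through $T_X^b,T_Y^b$ recovers the solution of \eqref{Eq}. The first requirement is immediate: $\tilde A=\Gamma(A)$ is Metzler by the definition of the min-Metzler representation, and $\tilde B=\Pi(B),\ \tilde E=\Pi(E),\ \tilde C=\Pi(C),\ \tilde D=\Pi(D)$ have nonnegative entries by the definition of the min-positive representation; since \eqref{Equation-main-IPR} has the same structural form as \eqref{Eq}, Proposition \ref{Pro-positive} shows it is positive.

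For the second and principal requirement, fix continuous $\varphi:[-\tau,0]\to\R^d$, $\psi:[-\tau,0]\to\R^n$ satisfying (K), set $\tilde\varphi:=\pi\circ\varphi$ and $\tilde\psi:=\pi\circ\psi$ (continuous and $\R_{\ge0}$-valued by construction of $\pi$), and let $(\tilde x,\tilde y)$ be the corresponding solution of \eqref{Equation-main-IPR}. Define $\hat x(t):=\Delta_d\tilde x(t)$ and $\hat y(t):=\Delta_n\tilde y(t)$. Because $\Delta_d$ and $\Delta_n$ are constant matrices, $^{\!C}D^{\alpha}_{0^+}$ commutes with left-multiplication by $\Delta_d$; applying $\Delta_d$ to the first line of \eqref{Equation-main-IPR} and using the identities $\Delta_d\Gamma(A)=A\Delta_d$, $\Delta_d\Pi(B)=B\Delta_d$, $\Delta_d\Pi(E)=E\Delta_n$ (cf.\ the Remark following the definitions of $\Gamma,\Pi,\pi$) gives $^{\!C}D^{\alpha}_{0^+}\hat x(t)=A\hat x(t)+B\hat x(t-\tau_1)+E\hat y(t-\tau_2)$; in the delayed terms one uses $\Delta_d\tilde x(s)=\Delta_d\pi(\varphi(s))=\varphi(s)=\hat x(s)$ and $\Delta_n\tilde y(s)=\psi(s)=\hat y(s)$ on $[-\tau,0]$, so that $\hat x(t-\tau_1)=\Delta_d\tilde x(t-\tau_1)$ and $\hat y(t-\tau_2)=\Delta_n\tilde y(t-\tau_2)$ whether or not the arguments are negative. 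Similarly, applying $\Delta_n$ to the second line and using $\Delta_n\Pi(C)=C\Delta_d$, $\Delta_n\Pi(D)=D\Delta_n$ yields $\hat y(t)=C\hat x(t)+D\hat y(t-\tau_3)$, while $\hat x=\varphi$, $\hat y=\psi$ on $[-\tau,0]$, so (K) holds for $(\hat x,\hat y)$ as well.

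Once $(\hat x,\hat y)$ is known to be a genuine (continuous) solution of \eqref{Eq}, the uniqueness in Lemma \ref{exist_sol} forces $(\hat x,\hat y)=\Phi(\cdot;\varphi,\psi)$, that is $x(t)=\Delta_d\tilde x(t)=T_X^b(\tilde x(t))$ and $y(t)=\Delta_n\tilde y(t)=T_Y^b(\tilde y(t))$ for all $t\ge0$, which is exactly the implication required in the definition of a PR; together with the positivity established above this completes the proof.

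The step I expect to be the main obstacle is precisely the continuity of $(\hat x,\hat y)$: $\hat x$ is clearly continuous, but the lifted data $\pi\circ\varphi,\pi\circ\psi$ need not satisfy the compatibility condition of the auxiliary system itself, so $\tilde y$ may a priori jump at $t=0$. This is repaired by observing that $\Delta_n$ kills the jump: applying $\Delta_n$ to the one-sided value $\tilde y(0^+)=\Pi(C)\pi(\varphi(0))+\Pi(D)\pi(\psi(-\tau_3))$ gives $C\varphi(0)+D\psi(-\tau_3)=\psi(0)=\hat y(0)$ by (K), so $\hat y$ is in fact continuous at $0$ (and continuous on the subsequent steps for the usual reasons). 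Apart from this point, the proof is only bookkeeping with the linear identities $\Delta_k\pi(x)=x$, $\Delta_k\Pi(M)=M\Delta_\ell$, $\Delta_k\Gamma(A)=A\Delta_k$ and the linearity of the Caputo derivative.
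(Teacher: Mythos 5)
Your proof follows the same route as the paper's: positivity of \eqref{Equation-main-IPR} via Proposition \ref{Pro-positive}, then showing that $(\Delta_d\tilde x,\Delta_n\tilde y)$ solves \eqref{Eq} by means of the intertwining identities $\Delta_k\Gamma(A)=A\Delta_k$, $\Delta_k\Pi(M)=M\Delta_l$ and the restriction of $\Delta_d\pi(\varphi)=\varphi$, $\Delta_n\pi(\psi)=\psi$ on $[-\tau,0]$, and finally invoking the uniqueness of Lemma \ref{exist_sol}. The one point where you diverge --- observing that $\pi\circ\varphi,\pi\circ\psi$ need not satisfy the compatibility condition of the lifted system and that $\Delta_n$ annihilates any resulting jump of $\tilde y$ at $t=0$ --- is handled more carefully in your write-up than in the paper, which infers the lifted compatibility condition from $\Delta_n\tilde\psi(0)=\Delta_n\bigl[\tilde C\tilde\varphi(0)+\tilde D\tilde\psi(-\tau_3)\bigr]$ even though $\Delta_n$ is not injective.
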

 
 \begin{proof}
     Since $\tilde{A}$ is a Metzler matrix, $\tilde{B},\ \tilde{E},\ \tilde{C},\ \tilde{D}\ $ are nonnegative matrices, so by Proposition  \ref{Pro-positive}, \eqref{Equation-main-IPR} is positive.
     Define
     \begin{align*}
         \hat{x}(t)=T_X^b(\tilde{x}(t))=\Delta_d\tilde{x}(t),\ t\ge-\tau; \quad
         \hat{y}(t)=T_Y^b(\tilde{y}(t))=\Delta_n\tilde{y}(t),\ t\ge-\tau.
     \end{align*}
     
     \begin{itemize}
         \item If $t\in[-\tau,0]$, then
         \begin{align*}
             \hat{x}(t)=\Delta_d\tilde{x}(t)=\Delta_d\tilde{\varphi}(t)=\varphi(t),\quad
             \hat{y}(t)=\Delta_n\tilde{y}(t)=\Delta_n\tilde{\psi}(t)=\psi(t).
         \end{align*}
         Besides that, from (K), we have
         \begin{align*}
         \Delta_n\tilde{\psi}(0)&=C\Delta_d\tilde{\varphi}(0)+D\Delta_n\tilde{\psi}(-\tau_3)\\
         &=\Delta_n \Pi(C)\tilde{\varphi}(0)+\Delta_n\Pi(D)\tilde{\psi}(-\tau_3)\\
         &=\Delta_n \bigg[\tilde{C}\tilde{\varphi}(0)+\tilde{D}\tilde{\psi}(-\tau_3)\bigg],
         \end{align*}
         this implies $\tilde{C}\tilde{\varphi}(0)+\tilde{D}\tilde{\psi}(-\tau_3)=\tilde{\psi}(0).$
         \item If $t>0$, then
         \begin{align*}
             ^{\!C}D^{{\alpha}}_{0^+}{\hat{x}}(t)&= {^{\!C}D^{{\alpha}}_{0^+}}\Delta_d{\tilde{x}}(t)=\Delta_d{^{\!C}D^{{\alpha}}_{0^+}}{\tilde{x}}(t)\\
             &=\Delta_d \tilde{A}\tilde{x}(t)+\Delta_d \tilde{B}\tilde{x}(t-\tau_1)+\Delta_d \tilde{E}\tilde{y}(t-\tau_2)\\
             &=\Delta_d \Gamma{(A)}\pi({x}(t))+\Delta_d \Pi{(B)}\pi({x}(t-\tau_1))+\Delta_d \Pi{(E)}\pi({y}(t-\tau_2))\\
             &=A {x}(t)+B{x}(t-\tau_1)+E{y}(t-\tau_2),\\
             \hat{y}(t)&=\Delta_n\tilde{y}(t)=\Delta_n\tilde{C}\tilde{x}(t)+\Delta_n\tilde{D}\tilde{y}(t-\tau_3)\\
             &=\Delta_n \Pi{(C)}\pi({x}(t))+\Delta_n \Pi{(D)}\pi({y}(t-\tau_3))\\
             &=Cx(t)+Dy(t-\tau_3).
         \end{align*}
     \end{itemize}
     Thus $\hat{x}(t)$, $\hat{y}(t)$ is also a solution of the system \eqref{Eq}. Due to the fact the system \eqref{Eq} has a unique solution, it leads to that $\hat{x}(t)=x(t), \hat{y}(t)=y(t), \forall t\in[-\tau,+\infty),$
     which finishes the proof.
 \end{proof}
 
 Our main contribution is the following theorem.
\begin{theorem} \label{Theorem-app}
    Consider the system \eqref{Eq}. Let $x(\cdot)=(x_1(\cdot),\dots,x_d(\cdot))^{\rm T}$, $y(\cdot)=(y_1(\cdot),\dots,y_n(\cdot))^{\rm T}$ be its solution. Define
    \[\tilde{A}:=\Gamma(A),\ \tilde{B}:=\Pi(B),\ \tilde{E}:=\Pi(E),\ \tilde{C}:=\Pi(C),\ \tilde{D}:=\Pi(D).\]
    Assume that $a_0,\ b_0,\ e_0,\ c_0,\ d_0$ are five real numbers with 
\begin{align*} a_0=-\max_{j\in\{1,\dots,2d\}}\sum_{i=1}^{2d}\tilde{a}_{ij},\ b_0=\max_{j\in\{1,\dots,2d\}}\sum_{i=1}^{2d}\tilde{b}_{ij},\ e_0=\max_{j\in\{1,\dots,2n\}}\sum_{i=1}^{2d}\tilde{e}_{ij},
c_0=\max_{j\in\{1,\dots,2d\}}\sum_{i=1}^{2n}\tilde{c}_{ij},\ d_0=\max_{j\in\{1,\dots,2n\}}\sum_{j=1}^{2n}\tilde{d}_{ij}
\end{align*} 
satisfying that
$0\le d_0<1,b_0\ge 0,c_0\ge 0,a_0>b_0+\displaystyle\frac{e_0c_0}{1-d_0}>0.$ Then, for any $\varphi\in C([-\tau,0];\R^d),\ \psi\in C([-\tau,0];\R^n)$ such that  the condition \textup{(K)} is verified,  
there are two positive constants $M>0, \lambda^*>0$ satisfying
\begin{align*} 
	|x_i(t)|&\leq ME_{\alpha}(-\lambda^*t^\alpha),\;\text{for}\;t\geq 0,\;i=1,\dots,d,\\
|y_i(t)|&\leq \frac{Mc_0}{E_{\alpha}(-\lambda^*\tau_3^\alpha)-d_0}E_{\alpha}(-\lambda^*t^\alpha),\;\text{for}\;t\geq 0,\;i=1,\dots,n,
\end{align*}
where $\lambda^*$ is a unique positive solution of the following equation 
\[h_{1}(\lambda):=\lambda-a_0+\frac{b_0}{E_{\alpha}(-\lambda \tau_1^\alpha)}+\frac{e_0c_0}{E_{\alpha}(-\lambda \tau_3^\alpha)-d_0}\times\frac{1}{E_{\alpha}(-\lambda \tau_2^\alpha)}=0.\]
\end{theorem}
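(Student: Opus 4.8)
The plan is to reduce Theorem \ref{Theorem-app} to the scalar coupled inequality of Theorem \ref{Theorem-Hala} by passing to the positive representation constructed in Theorem \ref{Theorem-IPR}. First I would apply Theorem \ref{Theorem-IPR}: given $\varphi,\psi$ satisfying (K), set $\tilde\varphi=\pi(\varphi)$, $\tilde\psi=\pi(\psi)$ (componentwise), so that the positive system \eqref{Equation-main-IPR} with $\tilde A=\Gamma(A)$, $\tilde B=\Pi(B)$, etc., has a solution $(\tilde x,\tilde y)$ with $x=\Delta_d\tilde x$, $y=\Delta_n\tilde y$. Since the representation is positive, $\tilde x(t)\in\R_{\ge0}^{2d}$ and $\tilde y(t)\in\R_{\ge0}^{2n}$ for all $t\ge0$. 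Because $|x_i(t)|\le\sum_{k}\tilde x_k(t)$ and likewise for $y$, it suffices to control the scalar quantities $u(t):=\mathbf 1^{\rm T}\tilde x(t)=\sum_{k=1}^{2d}\tilde x_k(t)$ and $v(t):=\mathbf 1^{\rm T}\tilde y(t)=\sum_{k=1}^{2n}\tilde y_k(t)$.

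Next I would derive the scalar differential inequalities for $u,v$. Summing the components of the first equation of \eqref{Equation-main-IPR}, $^{\!C}D^\alpha_{0^+}u(t)=\mathbf 1^{\rm T}\tilde A\tilde x(t)+\mathbf 1^{\rm T}\tilde B\tilde x(t-\tau_1)+\mathbf 1^{\rm T}\tilde E\tilde y(t-\tau_2)$. For the drift term, write $\mathbf 1^{\rm T}\tilde A\tilde x=\sum_j(\sum_i\tilde a_{ij})\tilde x_j\le(\max_j\sum_i\tilde a_{ij})\sum_j\tilde x_j=-a_0 u(t)$, using $\tilde x_j\ge0$ and the definition of $a_0$ (this inequality goes the right way precisely because $a_0\ge0$, or more carefully because each column sum is $\le -a_0$; the sign bookkeeping for $\Gamma(A)$ being Metzler must be checked here). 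Similarly $\mathbf 1^{\rm T}\tilde B\tilde x(t-\tau_1)\le b_0\sum_j\tilde x_j(t-\tau_1)\le b_0\sup_{0\le s\le\tau_1}u(t-s)$ and $\mathbf 1^{\rm T}\tilde E\tilde y(t-\tau_2)\le e_0\sup_{0\le s\le\tau_2}v(t-s)$, so $^{\!C}D^\alpha_{0^+}u(t)\le -a_0u(t)+b_0\sup_{0\le s_1\le\tau_1}u(t-s_1)+e_0\sup_{0\le s_2\le\tau_2}v(t-s_2)+0$, i.e. \eqref{E0_1} with $f=0$. Summing the algebraic equation the same way gives $v(t)\le c_0 u(t)+d_0\sup_{0\le s_3\le\tau_3}v(t-s_3)$, i.e. \eqref{E0_2}. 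The initial data $u,v$ on $[-\tau,0]$ are continuous and nonnegative, so all hypotheses of Theorem \ref{Theorem-Hala} hold with $(a,b,c,d,e,f)=(a_0,b_0,c_0,d_0,e_0,0)$, and the standing assumption $a_0>b_0+e_0c_0/(1-d_0)>0$ is exactly the required condition.

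Then I would invoke Theorem \ref{Theorem-Hala}: since $f=0$, the additive constants $\gamma_1,\gamma_2$ in \eqref{coef2} vanish, so there exist $M>0$ and $\lambda^*>0$ (the unique root in $(0,\lambda_0)$ of $h_1$, which is exactly the function \eqref{eq:hfun} with the given parameters) with $u(t)\le ME_\alpha(-\lambda^*t^\alpha)$ and $v(t)\le\frac{Mc_0}{E_\alpha(-\lambda^*\tau_3^\alpha)-d_0}E_\alpha(-\lambda^*t^\alpha)$ for all $t\ge0$. Finally, combining with the elementary bounds $|x_i(t)|=|(\Delta_d\tilde x(t))_i|=|\tilde x_i(t)-\tilde x_{i+d}(t)|\le\tilde x_i(t)+\tilde x_{i+d}(t)\le u(t)$ and $|y_i(t)|\le v(t)$ yields the claimed Mittag-Leffler estimates. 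The main obstacle, and the only place requiring genuine care, is the first step of the second paragraph: verifying that the componentwise nonnegativity of $\tilde x$ (guaranteed by positivity of the representation) together with the definitions of $a_0,b_0,c_0,d_0,e_0$ as column-sum maxima really produces inequalities in the direction needed for \eqref{E0_1}--\eqref{E0_2}; in particular one must confirm that $-a_0$ is an upper bound for every column sum of $\tilde A=\Gamma(A)$ and that $\tilde A$'s off-diagonal nonnegativity is not needed beyond ensuring the positivity already supplied by Theorem \ref{Theorem-IPR}. Everything else is a direct substitution into Theorem \ref{Theorem-Hala}.
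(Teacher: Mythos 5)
Your proposal is correct and follows essentially the same route as the paper: pass to the positive representation of Theorem \ref{Theorem-IPR}, sum the nonnegative components to obtain scalar quantities satisfying the coupled inequality \eqref{E0_1}--\eqref{E0_2} with $f=0$ (hence $\gamma_1=\gamma_2=0$), apply Theorem \ref{Theorem-Hala}, and recover $|x_i(t)|\le \tilde x_i(t)+\tilde x_{i+d}(t)\le \sum_k\tilde x_k(t)$ via $x=\Delta_d\tilde x$. The sign issue you flag at the end resolves exactly as you say — $\sum_j c_j\tilde x_j\le(\max_j c_j)\sum_j\tilde x_j$ needs only $\tilde x_j\ge0$, which positivity of the representation supplies — so there is no gap.
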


\begin{proof}
For convenience in presentation, we denote $\tilde{x}(\cdot)=(\tilde{x}_1(\cdot),\dots,\tilde{x}_{2d}(\cdot))^{\rm T}$, $\tilde{y}(\cdot)=(\tilde{y}_1(\cdot),\dots,\tilde{y}_{2n}(\cdot))^{\rm T}$ as the solution of the system \eqref{Equation-main-IPR}. Due to the fact that the system \eqref{Equation-main-IPR} is positive (it follows from \cite[Theorem 4.2]{Tuan-Thinh}), we have
$\tilde{x}_i(t)\geq 0$ and $\tilde{y}_1(\cdot)\geq 0$ for $t\geq -\tau,\;i=1,\dots,2n.$
 Define $\tilde{X}(t):=\tilde{x}_1(t)+\tilde{x}_2(t)+\cdots+\tilde{x}_{2d}(t)$ and $\tilde{Y}(t):=\tilde{y}_1(t)+\tilde{y}_2(t)+\cdots+\tilde{y}_{2n}(t),\;\forall t\in [-\tau,+\infty)$.
It is obvious to see that 
		\begin{align}
			{^{\!C}D^{\alpha}_{0^+}}\tilde{X}(t)&={^{\!C}D^{\alpha}_{0^+}}\tilde{x}_1(t)+{^{\!C}D^{\alpha}}_{0^+}\tilde{x}_2(t)+\cdots+{^{\!C}D^{\alpha}_{0^+}}\tilde{x}_{2d}(t)\notag\\
			&\hspace{-1.7cm}=\sum_{j=1}^{2d}\tilde{a}_{1j}\tilde{x}_j(t)+\sum_{j=1}^{2d}\tilde{b}_{1j}\tilde{x}_j(t-\tau_1)+\sum_{j=1}^{2n} \tilde{e}_{1j}\tilde{y}_j(t-\tau_2)+\sum_{j=1}^{d}\tilde{a}_{2j}(t)\tilde{x}_j(t)+\sum_{j=1}^{d}\tilde{b}_{2j}\tilde{x}_j(t-\tau_2)\notag\\
            &+\sum_{j=1}^{2n} \tilde{e}_{2j}\tilde{y}_j(t-\tau_2)+\cdots+\sum_{j=1}^{2d}\tilde{a}_{2dj}\tilde{x}_j(t)+\sum_{j=1}^{2d}\tilde{b}_{2dj}\tilde{x}_j(t-\tau_1)+\sum_{j=1}^{2n} \tilde{e}_{2dj}\tilde{y}_j(t-\tau_2)\notag\\
			&\hspace{-1.7cm}=\sum_{i=1}^{2d}\tilde{a}_{i1}\tilde{x}_1(t)+\sum_{i=1}^{2d}\tilde{a}_{i2}\tilde{x}_2(t)+\cdots+\sum_{i=1}^{2d}\tilde{a}_{i2d}\tilde{x}_{2d}(t)\notag\\
			&\hspace{-0.7cm}+\sum_{i=1}^{2d}\tilde{b}_{i1}x_1(t-\tau_1)+\sum_{i=1}^{2d}\tilde{b}_{i2}x_2(t-\tau_1)+\cdots+\sum_{i=1}^{2d}b_{i2d}x_d(t-\tau_1)\notag\\
            &\hspace{-0.7cm}+\sum_{i=1}^{d}\tilde{e}_{i1}y_1(t-\tau_2)+\sum_{i=1}^{2d}\tilde{e}_{i2}y_2(t-\tau_2)+\cdots+\sum_{i=1}^{2d}\tilde{b}_{in}y_{2n}(t-\tau_2)\notag\\
			&\hspace{-1.7cm}\le\bigg(\max_{1\leq j\leq 2d}\sum_{i=1}^{2d}\tilde{a}_{ij}\bigg)X(t)+\bigg(\max_{1\leq j\leq 2d}\sum_{i=1}^{2d}\tilde{b}_{ij}\bigg)X(t-\tau_1)+\bigg(\max_{1\leq j\leq 2n}\sum_{i=1}^{2d}\tilde{e}_{ij}\bigg)Y(t-\tau_2),\;\forall t>0.\label{dk1}
		\end{align}  
        Moreover, 
        \begin{equation}\label{dk2}
         \tilde{Y}(t)\leq \Big(\max_{1\leq j\leq 2d}\sum_{i=1}^{2d}\tilde{c}_{ij}\Big) \tilde{X}(t)+  \Big(\max_{1\leq j\leq 2n}\sum_{i=1}^{2d}\tilde{d}_{ij}\Big) \tilde{Y}(t-\tau_3),\;\forall t\geq 0.
        \end{equation} 
        Let
		\[a_0:=-\max_{j\in\{1,\dots,2d\}}\sum_{i=1}^{2d}\tilde{a}_{ij},\;b_0:=\max_{j\in\{1,\dots,2d\}}\sum_{i=1}^{2d}\tilde{b}_{ij},\;e_0:=\max_{j\in\{1,\dots,2n\}}\sum_{i=1}^{2d}\tilde{e}_{ij},
        c_0:=\max_{1\leq j\leq 2d}\sum_{i=1}^{2d}\tilde{c}_{ij}\;\text{and}\; d_0:=\max_{1\leq j\leq 2n}\sum_{i=1}^{2d}\tilde{d}_{ij}.
        \]
        By combining \eqref{dk1}, \eqref{dk2}, Theorem \ref{Theorem-Hala} and the assumptions stated in the theorem on the coefficients $a_0,b_0,c_0,d_0,e_0$, we conclude  that
        \begin{align} 
	\tilde{X}(t)\leq ME_{\alpha}(-\lambda^*t^\alpha), \quad
\tilde{Y}(t)\leq \frac{Mc_0}{E_{\alpha}(-\lambda^*\tau_3^\alpha)-d_0}E_{\alpha}(-\lambda^*t^\alpha)\label{tg2}
\end{align}
for all $t\geq 0$. Here, 
$\lambda^*$ is a unique positive solution of the following equation 
\[\lambda-a_0+\frac{b_0}{E_{\alpha}(-\lambda \tau_1^\alpha)}+\frac{e_0c_0}{E_{\alpha}(-\lambda \tau_3^\alpha)-d_0}\times\frac{1}{E_{\alpha}(-\lambda \tau_2^\alpha)}=0\]
and $M>0$ satisfies 
$\sup_{0\le s\le \tau}\tilde{X}(s-\tau)\leq M,\,\sup_{0\le s\le \tau}\tilde{Y}(s-\tau)\leq \frac{Mc_0}{E_{\alpha}(-\lambda^* \tau_3^\alpha)-d_0}.$
 On the other hand, by Theorem \ref{Theorem-IPR},
 \begin{align}
   x_i(t)=\tilde{x}_i-\tilde{x}_{i+d}(t),\,  
   y_i(t)=\tilde{y}_i-\tilde{y}_{i+n}(t)\;\text{for}\;t\geq -\tau,\;i=1,\dots,n.
 \end{align}
 These imply the required results.  The proof is complete.
\end{proof}

Theorem \ref{Theorem-app} establishes a novel approach to analyzing linear fractional-order systems outside the realm of positive system theory. To the best of our knowledge, it is the first result of its kind.
To close this section, we provide a concrete example and a numerical simulation to validate Theorem \ref{Theorem-app}.

\begin{example}
   Consider the fractional-order linear coupled system defined as
\begin{align} \label{Eq-Exam}
		\begin{cases}
		^{\!C}D^{{\alpha}}_{0+}x(t)&=Ax(t)+Bx(t-\tau_1)+Ey(t-\tau_2),\; t\in (0,\infty),\\
			y(t)&= \hspace{0.3cm}Cx(t)+Dy(t-\tau_3),\; t\in [0,\infty),\\
x(\cdot)&=\varphi(\cdot),\;y(\cdot)=\psi(\cdot)\;\text{on}\; [-\tau,0],
       		\end{cases}	
	\end{align}
 where ${\alpha}=0.6$, $\tau_1=1$, $\tau_2=0.6$, $\tau_3=0.9,$
	\begin{align*}
		A&=	\begin{pmatrix}
			-2.2 & -0.9\\
			0.7 & -2.3
		\end{pmatrix},\
		B=	\begin{pmatrix}
			-0.2 & 0.1 \\
			-0.2 & 0.5
		\end{pmatrix},\
		E=	\begin{pmatrix}
			0.2 & -0.1 & 0.1\\
			-0.2 & 0.3 & -0.1			
		\end{pmatrix},\\
		C&=	\begin{pmatrix}
			-0.1 & 0.2 \\
			0.1 & -0.3\\
               - 0.3 & -0.2
		\end{pmatrix},\
		D=	\begin{pmatrix}
			-0.1 & 0.2   & 0.2\\
			0.1 & -0.2   & -0.2\\
			-0.2 & -0.1 & 0.2
		\end{pmatrix}.
	\end{align*}
	By a simple calculation, we obtain 
 \begin{align*}
		\tilde{A}&=	\begin{pmatrix}
			-2.2 & 0&0&0.9\\
			0.7 & -2.3&0&0\\
            0&0.9&-2.2&0\\
            0&0&0.7&-2.3
		\end{pmatrix},\
		\tilde{B}=	\begin{pmatrix}
			0 & 0.1&0.2&0 \\
			0 & 0.5&0.2&0\\
            0.2 & 0&0&0.1\\
            0.2 & 0&0&0.5
		\end{pmatrix},
		\tilde{E}=	\begin{pmatrix}
			0.2 & 0 & 0.1&0&0.1&0\\
			0 & 0.3 & 0&0.2&0&0.1\\
            0&0.1&0&0.2 & 0 & 0.1\\
            0.2&0&0.1&0 & 0.3 & 0
		\end{pmatrix},\\
		\tilde{C}&=	\begin{pmatrix}
			0 & 0.2&0.1&0 \\
			0.1 & 0&0&0.3\\
                0 & 0&0.3&0.2\\
                0.1&0&0 & 0.2\\
                0&0.3&0.1 & 0\\
                0.3&0.2&0&0
		\end{pmatrix},\
		\tilde{D}=	\begin{pmatrix}
			0 & 0.2   & 0.2&0.1&0&0\\
			0.1 & 0   & 0&0&0.2&0.2\\
			0 & 0 & 0.2&0.2&0.1&0\\
            0.1&0&0&0 & 0.2   & 0.2\\
            0&0.2&0.2&0.1 & 0   & 0\\0.2&0.1&0&0 & 0 & 0.2
		\end{pmatrix}.
	\end{align*}
 This implies 
 \[a_0=1.4,\ b_0=0.6,\ c_0=0.7,\ d_0=0.6,\ e_0=0.4. \]
 It is clear that $0\le d_0<1,\ b_0\ge 0,\ c_0\ge 0,\ a_0=1.4>b_0+\displaystyle\frac{e_0c_0}{1-d_0}=1.3>0$. According to Theorem \ref{Theorem-app}, we conclude that the system \eqref{Eq-Exam} is globally attractive. 
\end{example}

\section{Contractivity and dissipativity of fractional neutral FDEs}
Another important application of fractional coupled Halanaly inequality is to establish the dissipativity and contractility of  fractional neutral functional differential equations (F-NFDEs). In \cite{Wang}, similar results have been obtained, but the parameter $\lambda^*$ involved in Mittag-Leffler function depends on a very complex equation, which make it difficult to prove its strict negativity.
We present a new and more concise analysis in this article.
Consider the initial value problems of F-NFDEs of Hale type
\begin{equation} \label{eq1}
\begin{split}
^{C}{D}_{0}^{\alpha}\left[ y(t)-Ny(t-\tau)\right]=f\left(t, y(t), y(t-\tau), \int_{t-\tau}^{t}g(t, \xi, y(\xi))d\xi \right),~~~t>0,\\
\end{split}
\end{equation}
subject to the initial function $y(t)=\varphi(t)$ for $t\in[-\tau, 0]$, $N\in \mathbb{R}^{n\times n}$ denotes a constant matrix with $2\|N\|^2 < 1$ (for convenience, we also use the notation $\|\cdot\|$ to represent the matrix norm on $\mathbb{R}^{n \times n}$). 
Throughout this paper, we assume that the problem (\ref{eq1}) has a unique continuous solution for all $t\geq0$. 
We consider that the nonlinear function $f$ satisfies two different kinds of structural assumptions.

\begin{itemize}
 \item
The continuous mapping $f: [0,+\infty)\times \mathbb{R}^{n} \times \mathbb{R}^{n}\times \mathbb{R}^{n}  \rightarrow \mathbb{R}^{n}$ 
satisfies the one-sided Lipschitz  condition
\begin{align}\label{eq2_ineq}
\langle u_{1}-u_{2}-N(v_{1}-v_{2}), f(t, u_{1}, v_{1}, w_{1})-f(t, u_{2}, v_{2}, w_{2})\rangle
\leq  a_1\|u_{1}-u_{2}\|^2+ b_1 \|v_{1}-v_{2}\|^2+c_1 \|w_{1}-w_{2}\|^2,
\end{align}
and $ g: [0,+\infty)\times [0,+\infty)\times \mathbb{R}^{n}  \rightarrow \mathbb{R}^{n}$ satisfies the Lipschitz condition
\begin{equation}\label{eq3_ineq}
\|g(t, \theta, y_{1})-g(t, \theta, y_{2}) \| \leq k_1 {\|y_{1}- y_{2}\|},\ t>0,\ t-\tau\le \theta\le t,\  y_{1}, y_{2} \in\mathbb{R}^{n},
\end{equation}
where $a_1, b_1, c_1$, and $k_1$ are some given real constants.
Here the one-sided Lipschitz continuous condition is used instead of the classical Lipschitz continuous condition, 
mainly because it allows the problem to be stiff, especially for the semi-discrete system of parabolic equations. 

\item The continuous mapping
$f$ satisfies the dissipative structural condition
\begin{equation} \label{eq4}
\begin{split}
\left\langle u-Nv, f\left(t, u, v, w\right)\right\rangle \leq \gamma+ a_2\|u\|^2+b_2|v\|^2+c_2\|w\|^2,\;\;t>0, u, v, w \in\mathbb{R}^{n},
\end{split}
\end{equation}
and the continuous function 
$g$ satisfies that 
\begin{equation} \label{eq5}
\begin{split}
\|g(t, \theta, y)\| \leq k_2 {\|y\|},\;\;t>0, t-\tau\le \theta\le t,  y \in\mathbb{R}^{n},
\end{split}
\end{equation}
where $\gamma, a_2, b_2, c_2$, and $k_2$ are also some real constants.
The structural assumption of dissipativity usually results in an attractive set, that is, the solution always enters a bounded sphere after a finite time for any given initial values. 
Dissipativity is a typical characteristic of many nonlinear systems and is widely used in many practical models \cite{Wang, WangWS}. 
\end{itemize}

Before presenting the main result of this section, we recall below an important inequality.
\begin{lemma}{(\cite[Lemma 1, Remark 1]{CDG_14}, \cite[Theorem 2]{TuanIET18})}\label{Ine-Caputo}
		Let $x:[0,+\infty)\rightarrow \R^n$ be continuous and assume that the Caputo fractional derivative $^{\!C}D^{{\alpha}}_{0^+}x(\cdot)$ exists on $(0,\infty)$. Then, for any $t>0$, we have
	\begin{equation}\label{ineq_add}	
  ^{C}D^{{\alpha}}_{0^+}\langle x(t),x(t)\rangle \le 2 \langle x(t), ^{C}D^{{\alpha}}_{0^+}x(t)\rangle.
  \end{equation}
	\end{lemma}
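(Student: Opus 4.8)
The plan is to prove Lemma~\ref{Ine-Caputo} by unwinding the definition of the Caputo derivative and exploiting the convexity of the quadratic form $\xi\mapsto\langle\xi,\xi\rangle$. First I would write, for fixed $t>0$,
\[
{}^{C}D^{\alpha}_{0^+}\langle x(t),x(t)\rangle
=\frac{1}{\Gamma(1-\alpha)}\frac{d}{dt}\int_0^t (t-s)^{-\alpha}\big(\langle x(s),x(s)\rangle-\langle x(0),x(0)\rangle\big)\diff s,
\]
and similarly express the right-hand side. The target inequality \eqref{ineq_add} will follow if I can show the pointwise bound
\[
\langle x(t),x(t)\rangle-\langle x(0),x(0)\rangle-2\langle x(t),x(t)-x(0)\rangle\le 0,
\]
or rather its appropriate integrated analogue, since the factor $(t-s)^{-\alpha}$ is nonnegative. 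Indeed the left-hand side above equals $-\|x(t)-x(0)\|^2\le 0$; more generally $\langle x(t),x(t)\rangle-\langle x(s),x(s)\rangle-2\langle x(t),x(t)-x(s)\rangle=-\|x(t)-x(s)\|^2\le 0$ for every $s$.

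The key steps, in order, are as follows. Write $\phi(t):={}^{C}D^{\alpha}_{0^+}\langle x(t),x(t)\rangle-2\langle x(t),{}^{C}D^{\alpha}_{0^+}x(t)\rangle$; I want $\phi(t)\le 0$. Using the convolution form of the Caputo derivative and the identity $\langle x(s),x(s)\rangle=\langle x(t),x(t)\rangle-2\langle x(t),x(t)-x(s)\rangle+\|x(t)-x(s)\|^2$, I would combine the two terms in $\phi(t)$ and show that $\phi(t)$ can be written as $-\tfrac{1}{\Gamma(1-\alpha)}\tfrac{d}{dt}\int_0^t(t-s)^{-\alpha}\|x(t)-x(s)\|^2\diff s$ plus boundary contributions that vanish; this is exactly the argument of \cite{CDG_14,TuanIET18}. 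Since the integrand $(t-s)^{-\alpha}\|x(t)-x(s)\|^2$ is nonnegative and vanishes as $s\to t$, the delicate point is to justify that the $t$-derivative of this singular convolution integral is nonnegative. One clean way is to regularize: replace $(t-s)^{-\alpha}$ by a smooth approximation, differentiate under the integral, observe that the resulting expression is a nonnegative kernel integrated against $\|x(t)-x(s)\|^2\ge 0$ together with a correction from the $t$-dependence inside $\|x(t)-x(s)\|^2$, handle that correction via Cauchy--Schwarz, and then pass to the limit. Alternatively, one may invoke the known inequality ${}^{C}D^{\alpha}_{0^+}V(x(t))\le \langle\nabla V(x(t)),{}^{C}D^{\alpha}_{0^+}x(t)\rangle$ valid for any convex $C^1$ function $V$, applied to $V(\xi)=\langle\xi,\xi\rangle$, which is precisely the content cited from \cite{CDG_14} and \cite{TuanIET18}; in that case the ``proof'' reduces to recording the reference and noting $\nabla V(\xi)=2\xi$.

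The main obstacle is the rigorous treatment of the singular kernel: the integrand has an integrable singularity at $s=t$, so differentiating the Riemann--Liouville integral in $t$ and controlling the sign of the result requires care (one cannot naively differentiate under the integral sign, and the boundary term at $s=t$ must be shown to vanish using continuity of $x$ and the bound $\|x(t)-x(s)\|^2=o(1)$ as $s\to t$). Everything else — the algebraic identity expanding $\langle x(s),x(s)\rangle$, the nonnegativity of $(t-s)^{-\alpha}$, and the final comparison — is routine. Since the statement is quoted verbatim from \cite{CDG_14,TuanIET18}, I expect the paper simply to cite those sources, and a self-contained proof would proceed through the regularization argument sketched above.
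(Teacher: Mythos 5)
The paper offers no proof of this lemma at all---it is imported verbatim from \cite[Lemma 1, Remark 1]{CDG_14} and \cite[Theorem 2]{TuanIET18}---so your instinct that the authors simply record the citation is exactly right, and that is the ``same approach'' here. Your sketch of the underlying argument (the algebraic identity reducing everything to the nonnegativity of $\|x(t)-x(s)\|^2$ integrated against the singular kernel, with the genuine work lying in justifying the differentiation/integration by parts of the singular convolution and the vanishing of the boundary term at $s=t$ for merely Caputo-differentiable $x$) is faithful to the proof given in those references.
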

	
 \begin{remark}
  Inequality \eqref{ineq_add} is a key tool in analyzing the asymptotic behavior of fractional differential equations. The original version was proposed by Aguila-Camacho, Duarte-Mermoud, and Gallegos \cite[Lemma 1, Remark 1]{CDG_14} for differentiable functions, and it was later extended by Trinh and Tuan \cite[Theorem 2]{TuanIET18} to Caputo fractionally differentiable functions.  
 \end{remark}

\begin{theorem}\label{thm:main}
(i) Let $y(t)$ be solution of (\ref{eq1}) with the stability conditions (\ref{eq2_ineq})--(\ref{eq3_ineq}). Assume that  $a_1<0$, $0\leq \left(2b_1-2a_1\|N\|^2+2c_1 k_1^{2}\tau^{2}\right)<-a_1$ and $2\|N\|^2 <1$. We then have the contractive stability inequality
\begin{equation} \label{eq18}
\begin{split}
\|y(t)-z(t)\|^2\leq  M_{1} E_{\alpha}(-\lambda^{*} t^{\alpha})~\text{ for }~t\geq 0,
\end{split}
\end{equation}
where $M_{1}=\max\limits_{-\tau\leq\xi\leq0}\|\varphi(\xi)-\chi(\xi)\|^2$, $z(t)$ denotes the solution for the perturbed problem
with the initial function $\chi(t)$, and the  parameter $\lambda^{*}>0$  is the unique solution of the following equation 
 \begin{equation}\label{eq19}
\begin{split}
h_{1}(\lambda):=\lambda+a_1+  2\left(b_1-a_1\|N\|^2+c_1 k_1^{2}\tau^{2}\right)/ E_{\alpha}(-\lambda\tau^{\alpha})=0. 
\end{split}
\end{equation}
Moreover, we have the Mittag-Leffler stablilty $\|y(t)-z(t)\|^2=O(t^{-\alpha})$ as $t\to \infty$.

(ii)  Let $y(t)$ be solution of (\ref{eq1}) with the dissipative conditions (\ref{eq4})--(\ref{eq5}). 
Assume that  $\gamma\geq 0$, $a_2<0$, $0\leq \left(2b_2-2a_2\|N\|^2+2c_2 k_2^{2}\tau^{2}\right)<-a_2$ and $2\|N\|^2 <1$. Then, for any $t\geq 0$, the following estimate hold
\begin{equation}\label{eq20}
\begin{split}
\|y(t)\|^{2} \le M_{2} E_{\alpha}(-\lambda^{*} t^{\alpha}) +\frac{-2\gamma}{a_1+2 \left(b_2-a_2\|N\|^2+c_2 k_2^{2}\tau^{2}\right)}, 
\end{split}
\end{equation}
where $M_{2}=\max\limits_{-\tau\leq\xi\leq0} \|\varphi(\xi)\|^2$ and the  parameter $\lambda^{*}>0$ is the unique solution of the following equation
 \begin{equation}\label{eq20_add}
h_{2}(\lambda):=\lambda+a_2+  2\left(b_2-a_2\|N\|^2+c_2 k_2^{2}\tau^{2}\right)/ E_{\alpha}(-\lambda\tau^{\alpha})=0.
\end{equation}
Further, the F-NFDEs in (\ref{eq1}) are dissipative, i.e., for any given $\varepsilon>0$, 
the ball $B\left(0,\sqrt{R}+\varepsilon\right)$ is an absorbing set, where $R:=\frac{-2\gamma}{a_1+2 \left(b_2-a_2\|N\|^2+c_2 k_2^{2}\tau^{2}\right)}$.
The Mittag-Leffler dissipativity rate is given by $\|y(t)\|^2\leq R+O(t^{-\alpha})$ as $t\to\infty$. 
\end{theorem}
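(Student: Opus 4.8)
The plan is to reduce both parts of Theorem~\ref{thm:main} to the coupled fractional Halanay inequality of Theorem~\ref{Theorem-Hala} by introducing suitable scalar ``energy'' functions and bounding their Caputo derivatives via Lemma~\ref{Ine-Caputo}. For part~(i), let $\omega(t):=y(t)-z(t)$ denote the difference of the two solutions; it satisfies the neutral equation $^{C}D^{\alpha}_{0^+}[\omega(t)-N\omega(t-\tau)]=f(t,y(t),\dots)-f(t,z(t),\dots)$. I would set $u(t):=\|\omega(t)-N\omega(t-\tau)\|^2$ and apply Lemma~\ref{Ine-Caputo} to get $^{C}D^{\alpha}_{0^+}u(t)\le 2\langle \omega(t)-N\omega(t-\tau),\,f(t,y,\dots)-f(t,z,\dots)\rangle$, then plug in the one-sided Lipschitz condition \eqref{eq2_ineq} and the Lipschitz bound \eqref{eq3_ineq} (the integral term is estimated by $\|\int_{t-\tau}^t (g(t,\xi,y(\xi))-g(t,\xi,z(\xi)))d\xi\|^2 \le \tau^2 k_1^2 \sup_{t-\tau\le\xi\le t}\|\omega(\xi)\|^2$ via Cauchy--Schwarz). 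This yields a differential inequality for $u(t)$ in terms of $\|\omega(t)\|^2$ and $\sup_{t-\tau\le s\le t}\|\omega(s)\|^2$.

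The key algebraic step is then to relate $u(t)=\|\omega(t)-N\omega(t-\tau)\|^2$ back to $v(t):=\|\omega(t)\|^2$. Using the elementary inequality $\|p-q\|^2 \ge (1-\theta)\|p\|^2 - (\theta^{-1}-1)\|q\|^2$ (or its reverse form), one gets something like $\|\omega(t)\|^2 \le 2u(t) + 2\|N\|^2\sup_{t-\tau\le s\le t}\|\omega(s)\|^2$, i.e.\ $v(t)\le 2u(t)+2\|N\|^2\sup_{0\le s\le\tau}v(t-s)$; here the hypothesis $2\|N\|^2<1$ is exactly what makes the coefficient $d=2\|N\|^2$ admissible (strictly less than $1$) in Theorem~\ref{Theorem-Hala}. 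Substituting $v$ into the inequality for $^{C}D^{\alpha}_{0^+}u$ gives a system of the form \eqref{E0_1}--\eqref{E0_2} with $a=-a_1$, $b=0$, $e = 2(b_1-a_1\|N\|^2+c_1k_1^2\tau^2)$, $c=2$, $d=2\|N\|^2$, $f=0$ (I would need to track the constants so that the stated hypothesis $0\le 2b_1-2a_1\|N\|^2+2c_1k_1^2\tau^2<-a_1$ translates to $a>b+ec/(1-d)$, up to the precise normalization of $c$ and $e$), and $\tau_1=\tau_2=\tau_3=\tau$. Then Theorem~\ref{Theorem-Hala} directly produces $\|\omega(t)-N\omega(t-\tau)\|^2 \le M E_\alpha(-\lambda^* t^\alpha)$ with $\lambda^*$ the root of \eqref{eq:hfun}, which with $\tau_1=\tau_2=\tau_3=\tau$ and $b=0$ collapses exactly to \eqref{eq19}; a final application of the $v$-estimate (and Lemma~\ref{Est-ML} to absorb the delayed term) upgrades this to the bound \eqref{eq18} on $\|\omega(t)\|^2$ itself. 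The asymptotic rate $\|y(t)-z(t)\|^2=O(t^{-\alpha})$ follows from the classical decay $E_\alpha(-\lambda^* t^\alpha)=O(t^{-\alpha})$ as $t\to\infty$.

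For part~(ii) the argument is parallel but with the inhomogeneous term retained: set $u(t):=\|y(t)-Ny(t-\tau)\|^2$, apply Lemma~\ref{Ine-Caputo} and the dissipative condition \eqref{eq4} together with \eqref{eq5} to obtain $^{C}D^{\alpha}_{0^+}u(t)\le -a u(t)+ e\sup_{0\le s\le\tau}v(t-s) + 2\gamma$ after substituting $v(t):=\|y(t)\|^2 \le 2u(t)+2\|N\|^2\sup_{0\le s\le\tau}v(t-s)$, so now $f=2\gamma>0$ in Theorem~\ref{Theorem-Hala}. The theorem then gives \eqref{eq20} with the constant term $\frac{(1-d)f}{(1-d)(a-b)-ce}$ simplifying (with $b=0$, $d=2\|N\|^2$, $f=2\gamma$, and the appropriate $c,e$) to $\frac{-2\gamma}{a_2+2(b_2-a_2\|N\|^2+c_2k_2^2\tau^2)}$, and $\lambda^*$ to the root of \eqref{eq20_add}. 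The absorbing-set / dissipativity conclusion is immediate: since $E_\alpha(-\lambda^* t^\alpha)\to 0$, for any $\varepsilon>0$ there is $T$ beyond which $\|y(t)\|^2\le R+\varepsilon^2$ (roughly), uniformly over initial data with a fixed bound, which is the definition of $B(0,\sqrt R+\varepsilon)$ being absorbing, and the rate $\|y(t)\|^2\le R+O(t^{-\alpha})$ again comes from the Mittag-Leffler decay.

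\textbf{Main obstacle.} The routine parts are the energy estimates and the invocation of Theorem~\ref{Theorem-Hala}; the delicate point is the bookkeeping of constants — making sure the coefficient of $\sup v(t-s)$ coming from the one-sided Lipschitz term, after the factor-of-$2$ losses incurred both in Lemma~\ref{Ine-Caputo} and in the inequality $\|p-q\|^2\le 2\|p\|^2+2\|q\|^2$, lines up precisely with $e$, $c$, $d$ so that the structural hypothesis $0\le 2b_1-2a_1\|N\|^2+2c_1k_1^2\tau^2<-a_1$ is equivalent to the condition $a>b+\frac{ec}{1-d}>0$ required by Theorem~\ref{Theorem-Hala}, and so that the root equation \eqref{eq:hfun} collapses cleanly to \eqref{eq19}/\eqref{eq20_add}. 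In particular one must be careful that the $\|N\|^2$ appearing in the energy bound for $\|y(t)\|^2$ in terms of $u(t)$ is handled consistently on both sides (it appears once inside $u$ and once in the reconstruction of $v$), since a naive split could produce a coefficient $2\|N\|^2$ that must be $<1$ — which is exactly the role of the standing assumption $2\|N\|^2<1$. A secondary technical point is ensuring the hypotheses of Theorem~\ref{Theorem-Hala} on continuity and existence of the Caputo derivative of $u$ hold, which follows from the assumed continuity of the solution and of $f,g$.
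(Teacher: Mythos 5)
Your proposal follows essentially the same route as the paper's proof: the same auxiliary functions $u(t)=\|e(t)-Ne(t-\tau)\|^2$ and $w(t)=\|e(t)\|^2$, the same application of Lemma~\ref{Ine-Caputo} combined with \eqref{eq2_ineq}--\eqref{eq3_ineq} (resp.\ \eqref{eq4}--\eqref{eq5}), the same coupled system with $w(t)\le 2u(t)+2\|N\|^2 w(t-\tau)$, and the same reduction to Theorem~\ref{Theorem-Hala} with $b=0$, $c=2$, $d=2\|N\|^2$, $\tau_1=\tau_2=\tau_3=\tau$. The ``bookkeeping'' obstacle you flag is genuine rather than routine: the paper stops exactly where you do, citing Theorem~\ref{Theorem-Hala} without checking that the stated hypothesis $2b_1-2a_1\|N\|^2+2c_1k_1^2\tau^2<-a_1$ actually gives $a>b+ec/(1-d)$, or that \eqref{eq:hfun} with these parameters collapses to the simpler root equation \eqref{eq19}.
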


\begin{proof}
(i) Let $e(t):=y(t)-z(t)$ for $t\geq -\tau$.
From \eqref{eq2_ineq} and \eqref{eq3_ineq}, we have
\begin{equation*}
\begin{split}
&\left\langle  e(t)-N e(t-\tau), ^{C}{D}_{0+}^{\alpha}(e(t)-N e(t-\tau)) \right\rangle\\
&=\left\langle  e(t)-N e(t-\tau),~ f(t, y(t), y(t-\tau), \int_{t-\tau}^{t} g(t, \xi, y(\xi)))-f(t, z(t), z(t-\tau), \int_{t-\tau}^{t} g(t, \xi, z(\xi)))\right\rangle\\
&\le a_1 \|e(t)\|^{2}+b_1 \|e(t-\tau)\|^{2}+c_1 \left\| \int_{t-\tau}^{t}g(t, \xi, y(\xi))d\xi -  \int_{t-\tau}^{t}g(t, \xi, z(\xi))d\xi\right \|^2\\
&\le a_1 \|e(t)\|^{2}+b_1 \|e(t-\tau)\|^{2}+c_1 k_1^{2} \tau^{2} \max_{t-\tau \leq \xi \leq t} \|e(\xi)\|^2.
\end{split}
\end{equation*}
It follows from Lemma \ref{Ine-Caputo} that 
\begin{equation}\label{eq21}
\begin{split}
^{C}{D}_{0+}^{\alpha} \left(\|e(t)-N e(t-\tau)\|^2\right) \le 2a_1 \|e(t)\|^{2}+2b_1 \|e(t-\tau)\|^{2}+2c_1 k_1^{2} \tau^{2} \max_{t-\tau \leq \xi \leq t} \|e(\xi)\|^2.
\end{split}
\end{equation}

Consider the auxiliary functions
$u(t)= \|e(t) -N e(t-\tau)\|^2 \text{ and } w(t)=\|e(t)\|^2.$
From the definitions, it is easy to see that $u(t)\le 2\|e(t)\|^2 +2\|N e(t-\tau)\|^2 \le 2w(t) +2\|N \|^2 w(t-\tau)$, which implies that 
$2a_1 w(t)\le a_1 u(t) - 2a_1 \|N \|^2 w(t-\tau)$ for the constant $a_1<0$. 
It follows from (\ref{eq21}) that
\begin{equation}\label{eq23}
\begin{split}
^{C}{D}_{0+}^{\alpha}u(t) &\leq 2a_1 w(t)+2b_1 w(t-\tau)+2c_1k_1^{2}\tau^{2} \max_{t-\tau \leq \xi \leq t} w(\xi)\\
 &\leq a_1 u(t)+\left(2b_1-2a_1\|N\|^2\right)w(t-\tau)+2c_1 k_1^{2}\tau^{2} \max_{t-\tau \leq \xi \leq t} w(\xi)\\
  &\leq a_1 u(t)+ \left(2b_1-2a_1\|N\|^2+2c_1 k_1^{2}\tau^{2}\right) \max_{t-\tau \leq \xi \leq t} w(\xi),\;\;t>0.\\
\end{split}
\end{equation}
 On the other hand, we have $\|e(t)\|=\|e(t)- N e(t-\tau)+N e(t-\tau)\|   \le \|e(t)- N e(t-\tau)\| +\|N e(t-\tau)\|$,
which yields that $w(t)\le 2u(t) + 2\|N \|^2 w(t-\tau)$ for $t\geq 0$.  We now get the coupled time fractional inequalities on $u(t)$ and $w(t)$ as
\begin{equation} \label{eq24}
\begin{split}
\left\{
\begin{aligned}
^{C}{D}_{0+}^{\alpha}u(t)  &\leq  a_1 u(t)+ \left(2b_1-2a_1\|N\|^2+2c_1 k_1^{2}\tau^{2}\right) \max_{t-\tau \leq \xi \leq t} w(\xi),\;\;t>0,\\
w(t)&\le 2u(t) +2 \|N \|^2 w(t-\tau),\;\;t\geq 0.
\end{aligned}
\right.
\end{split}
\end{equation}
The expected estimates are derived from the generalized fractional Halanay inequality in Theorem 2.4.

\noindent (ii) It follows from the dissipative condition (\ref{eq4}) that
\begin{equation*}
\begin{split}
&\left\langle y(t)-N y(t-\tau)\right), ^{C}{D}_{0+}^{\alpha}\left(y(t)-N y(t-\tau)\right\rangle\\
&=\left\langle y(t)-N y(t-\tau),  f(t, y(t), y(t-\tau), \int_{t-\tau}^{t}g(t, \xi, y(\xi))d\xi) \right\rangle\\
&\leq \gamma+ a_2\|y(t)\|^2+b_2\|y(t-\tau)\|^2+c_2 \left\| \int_{t-\tau}^{t}g(t, \xi, y(\xi))d\xi \right \|^2\\
&\leq \gamma+ a_2\|y(t)\|^2+b_2\|y(t-\tau)\|^2+c_2 k_2^{2}\tau^{2} \max_{t-\tau \leq \xi \leq t} \|y(\xi)\|^2.
\end{split}
\end{equation*}
 \noindent
By using Lemma \ref{Ine-Caputo}, then 
\begin{equation*}
\begin{split}
^{C}{D}_{0+}^{\alpha}\left(\|y(t) -N y(t-\tau)\|^2\right) \leq 2\gamma+ 2a_2\|y(t)\|^2+2b_2\|y(t-\tau)\|^2+2c_2 k_2^{2}\tau^{2} \max_{t-\tau \leq \xi \leq t} \|y(\xi)\|^2.
\end{split}
\end{equation*}
Similar to (i), we can consider auxiliary functions $u(t)= \|e(t) -N e(t-\tau)\|^2$ and $w(t)=\|y(t)\|^2$. After simple calculations, we can obtain inequalities on $u(t)$ and $w(t)$ as
\begin{equation} \label{eq26}
\begin{split}
\left\{
\begin{aligned}
^{C}{D}_{0+}^{\alpha}u(t)  &\leq 2\gamma+ a_2 u(t)+ \left(2b_2-2a_2\|N\|^2+2c_2 k_2^{2}\tau^{2}\right) \max_{t-\tau \leq \xi \leq t} w(\xi),\quad t>0,\\
w(t)&\le 2u(t) +2 \|N \|^2 w(t-\tau),\quad t\geq 0.
\end{aligned}
\right.
\end{split}
\end{equation}
Thus the desired results follow from the obtained fractional Halanay inequality. 
\end{proof}

\section*{Concluding remarks}
This paper presents a generalized fractional Halanay-type coupled inequality, which serves as both a natural extension and a strengthened version of several existing results in the literature. By combining this new inequality with the positive representation approach, we derive an asymptotic stability criterion for a class of fractional-order linear coupled systems with constant delays. To the best of our knowledge, this method offers a novel approach to studying the stability theory of fractional-order delay systems. As another application, we establish the dissipativity and contractility of time fractional neutral functional differential equations. For the key control parameter in the Mittag-Leffler function, its strict negativity can be more concisely proven.

\end{document}